\newcommand{\Real}{\mathbb R}
\newcommand{\Cplx}{\mathbb C}
\newcommand{\diag}{\operatorname{diag}}
\newcommand{\bfe}{\mathbf{e}}
\newcommand{\Imunit}{\mathfrak{i}}
\newcommand{\mb}[1]{\left(\begin{array}{#1}}
\newcommand{\me}{\end{array}\right)}
\newcommand{\eqb}{\begin{equation}}
\newcommand{\eqe}{\end{equation}}
\newcommand{\eqab}{\begin{eqnarray}}
\newcommand{\eqae}{\end{eqnarray}}
\newcommand{\bfmu}{{\boldsymbol\mu}}
\newcommand{\bfsigma}{{\boldsymbol\sigma}}
\newcommand{\bfSigma}{{\boldsymbol\Sigma}}
\newcommand{\bfOmega}{{\boldsymbol\Omega}}
\newcommand{\bflambda}{{\boldsymbol\lambda}}
\newcommand{\bfvareps}{{\boldsymbol\varepsilon}}
\newcommand{\bfeps}{{\boldsymbol\epsilon}}
\newcommand{\bfdelta}{{\boldsymbol\delta}}
\newcommand{\bfwp}{{\boldsymbol\wp}}
\newcommand{\bfA}{\mathbf{A}}
\newcommand{\bfW}{\mathbf{W}}
\newcommand{\bfV}{\mathbf{V}}
\newcommand{\bfM}{\mathbf{M}}
\newcommand{\bfD}{\mathbf{D}}
\newcommand{\bfC}{\mathbf{C}}
\newcommand{\bfX}{\mathbf{X}}
\newcommand{\bfY}{\mathbf{Y}}
\newcommand{\bfI}{\mathbf{I}}
\newcommand{\bfS}{\mathbf{S}}
\newcommand{\bfb}{\mathbf{b}}
\newcommand{\bfc}{\mathbf{c}}
\newcommand{\bfx}{\mathbf{x}}
\newcommand{\bfq}{\mathbf{q}}
\newcommand{\bff}{\mathbf{f}}
\newcommand{\bfdq}{\bfdelta\bfq}
\newcommand{\bfdb}{\bfdelta\bfb}
\newcommand{\bfdA}{\bfdelta\bfA}
\newcommand{\bfdAr}{\bfdelta\bfAr}
\newcommand{\bfAr}{{\bfA}_r}
\newcommand{\bfbr}{{\bfb}_r}
\newcommand{\bfcr}{{\bfc}_r}
\newcommand{\tbfAr}{\widetilde{\bfA}_r}
\newcommand{\tbfX}{\widetilde{\bfX}}
\newcommand{\tbfM}{\widetilde{\bfM}}
\newcommand{\tbfD}{\widetilde{\bfD}}
\newcommand{\tbfC}{\widetilde{\bfC}}
\newcommand{\tbfq}{\widetilde{\bfq}}
\begin{document}

\title{Revisiting IRKA:\\ Connections with pole placement and backward stability\thanks{The work of Beattie was supported in parts by NSF through Grant DMS-1819110. 
{The work of Drma\v{c} was supported in parts by the the  DARPA  Contract  HR0011-16-C-0116 \emph{``On a Data-Driven, Operator-Theoretic Framework for Space-Time Analysis of Process Dynamics''} and the DARPA Contract HR0011-18-9-0033  \emph{``The Physics of Artificial Intelligence''}.}
The work of Gugercin was supported in parts by NSF through Grant DMS-1720257 and DMS-1819110.
}
}

\author{C.~Beattie        \and
        Z. ~Drma\v{c} \and
        S.~Gugercin 
        }

%\authorrunning{Short form of author list} % if too long for running head

\institute{C.~Beattie \at
              Department of Mathematics, Virginia Tech, Blacksburg, VA, 24061, USA  \\
              %Tel.: +123-45-678910\\
              %Fax: +123-45-678910\\
              \email{beattie@vt.edu}           %  \\
%             \emph{Present address:} of F. Author  %  if needed
\and
Z. ~Drma\v{c} \at
              Faculty of Science, Department of Mathematics, University of Zagreb, Zagreb, 10000, Croatia  \\
              %Tel.: +123-45-678910\\
              %Fax: +123-45-678910\\
              \email{drmac@math.hr}  
           \and
  S.~Gugercin \at
              Department of Mathematics and Division of Computational Modeling and Data Analytics, Virginia Tech, Blacksburg, VA, 24061, USA  \\
              %Tel.: +123-45-678910\\
              %Fax: +123-45-678910\\
              \email{gugercin@vt.edu}    
}

\date{Received: date / Accepted: date}
% The correct dates will be entered by the editor

\maketitle

\begin{abstract}
The iterative  rational  Krylov  algorithm  (\textsf{IRKA})  is  a popular  approach for producing locally optimal reduced-order $\mathcal{H}_2$-approximations to linear time-invariant (LTI) dynamical systems. Overall, \textsf{IRKA} has seen significant practical success in computing high fidelity (locally) optimal reduced models and has been successfully applied in a variety of large-scale settings. Moreover, \textsf{IRKA} has provided a foundation for recent extensions to the systematic model reduction of bilinear and nonlinear dynamical systems. 

Convergence of the basic \textsf{IRKA} iteration is generally observed to be rapid --- but not always; and despite the simplicity of the iteration, its convergence behavior is  remarkably complex and not well understood aside from a few special cases.  The overall effectiveness and computational robustness of the basic \textsf{IRKA} iteration is surprising since its algorithmic goals are very similar to a pole assignment problem, which can be notoriously ill-conditioned.   We investigate this connection here and discuss a variety of nice properties of the \textsf{IRKA} iteration that are revealed when the iteration is framed with respect to a primitive basis.   We find that the connection with pole assignment suggests refinements to the basic algorithm that can improve convergence behavior, leading also to new choices for termination criteria that assure backward stability.

\keywords{Interpolation \and Model Reduction \and $\mathcal{H}_2$-optimality \and Pole placement \and Backward stability}
\end{abstract}
\section{Introduction}
The iterative rational Krylov algorithm (\textsf{IRKA}) was introduced in \cite{Gugercin_Antoulas_Beattie:2008:IRKA} as an approach for producing locally optimal reduced-order $\mathcal{H}_2$-approximations to linear time-invariant (LTI) dynamical systems given, say, as
\begin{equation} \label{origLTIsys}
\dot{\mathbf{x}}(t) = \mathbf{A} \mathbf{x}(t) + \mathbf{b}\, u(t),~\quad
y(t) = \mathbf{c}^T \mathbf{x}(t),
\end{equation}
where $\mathbf{A}\in\mathbb{R}^{n\times n}$, and $\mathbf{b},\mathbf{c}\in
\mathbb{R}^n$. We will assume that 
the dynamical system is stable, i.e., all the eigenvalues of $\bfA$ have negative real parts. The cases of interest will be when $n$ is very large, and we seek a substantially lower order dynamical system, say, 
\begin{equation} \label{redLTIsys}
\dot{\mathbf{x}}_r(t) = \mathbf{A}_r \mathbf{x}_r(t) + \mathbf{b}_r\, u(t),\quad
y_r(t) = \mathbf{c}^T_r \mathbf{x}_r(t),
\end{equation}
with $\mathbf{A}_r\in\mathbb{R}^{r\times r}$, and $\mathbf{b}_r,\mathbf{c}_r\in
\mathbb{R}^r$.  One seeks a realization \eqref{redLTIsys} so that the reduced system order $r\ll n$ and the reduced system output $y_r\approx y$ uniformly well over all inputs $u\in \mathcal{L}_2$ with 
$\int_0^{\infty}|u(t)|^2\,dt\leq 1$.

Projection-based model reduction is a common framework to obtain reduced models: Given the full model \eqref{origLTIsys}, construct two model reduction bases 
$\bfV, \bfW \in \Cplx^{n \times r}$ with $\bfW^T\bfV$ invertible. Then the reduced model quantities in \eqref{redLTIsys} are given by  
\begin{equation} \label{Project}
    \bfAr =  (\mathbf{W}^T\mathbf{V})^{-1}\mathbf{W}^T\mathbf{A}\mathbf{V},~~\bfbr =(\mathbf{W}^T\mathbf{V})^{-1}\mathbf{W}^T\mathbf{b},~~ \mbox{and}~~\bfcr = \bfc \bfV.
\end{equation}
The following question arises: How to choose $\bfV$ and $\bfW$ so that the reduced model is a high-fidelity approximation to the original one? There are many different ways to construct $\bfV$ and $\bfW$, and we refer the reader to \cite{antoulas2005approximation,AntBG20,MORBook} for detailed descriptions of such methods for linear dynamical systems. Here we focus on constructing optimal interpolatory reduced models.

To discuss interpolation and optimality, we first need to define the concept of transfer function. Let $\mathcal{Y}(s)$, $\mathcal{Y}_r(s)$, and $\mathcal{U}(s)$ denote the Laplace transforms of 
$y(t)$, $y_r(t)$, and $u(t)$, respectively.
Taking the Laplace transforms of \eqref{origLTIsys} and \eqref{redLTIsys} yields
\begin{align}
\mathcal{Y}(s) &= H(s)\, \mathcal{U}(s)\quad \mbox{where}\quad H(s) = \bfc^T(s \bfI - \bfA)^{-1}\bfb,~\mbox{and} \\
\mathcal{Y}_r(s) &= H_r(s)\, \mathcal{U}(s)\quad \mbox{where}\quad H_r(s) = \bfc_r^T(s \bfI_r - \bfA_r)^{-1}\bfb_r. 
\end{align}
The rational functions $H(s)$ and $H_r(s)$ are    the transfer functions associated with the full model
\eqref{origLTIsys} and the reduced model \eqref{redLTIsys}. While $H(s)$ is a degree-$n$ rational function, $H_r(s)$ is of degree-$r$.  

Interpolatory model reduction aims to construct an $H_r(s)$ that  interpolates $H(s)$ at selected points in the complex plane. Indeed, we will focus on Hermite interpolation, as this will be tied to optimality later. Suppose we are given $r$ mutually distinct interpolation points (also  called \emph{shifts}), $\bfsigma = \{\sigma_1,\sigma_2,\ldots,\sigma_r\}$, in the complex plane. We will assume that  the shifts have positive real parts and that are closed (as a set) under conjugation, i.e., there exists an index permutation  $(i_1,\,i_2,\,\ldots,\,i_r\,)$ such that 
 $\overline{\bfsigma}=\{\overline{\sigma_1},\,\overline{\sigma_2},\, \ldots, \overline{\sigma_r}\}=\{\sigma_{i_1},\,\sigma_{i_2},\, \ldots, \sigma_{i_r}\}$.

Given $\bfsigma$, construct the model reduction bases $\bfV \in \Cplx^{n\times r}$ and $\bfW\in \Cplx^{n\times r}$ such that
\begin{align} \label{Veq}
\textsf{Range}(\bfV) &= \mathsf{span}\left\{(\sigma_1\bfI-\mathbf{A})^{-1}\mathbf{b},
 \ldots,(\sigma_r\bfI-\mathbf{A})^{-1}\mathbf{b}\right\}~~\mbox{and}\\\label{Weq}
 \textsf{Range}(\bfW) &= \mathsf{span}\left\{(\sigma_1\bfI-\mathbf{A}^T)^{-1}\mathbf{c},
 \ldots,(\sigma_r\bfI-\mathbf{A}^T)^{-1}\mathbf{c}\right\}.
\end{align}
Then, the reduced model \eqref{redLTIsys} constructed as in 
\eqref{Project} satisfies 
\begin{equation}\label{eq:Herm-interpol-sigma}
    H_r(\sigma_i) = H(\sigma_i) \quad \mbox{and} 
    \quad H'_r(\sigma_i) = H'(\sigma_i)\quad \mbox{for}~i=1,2,\ldots,r.
\end{equation}
In other words, $H_r(s)$ is a rational Hermite interpolant to $H(s)$ at the specified interpolation points. 
However, this construction requires knowing the interpolation points. How should one choose them to guarantee a high-fidelity reduced model?

We will measure fidelity using the $\mathcal{H}_2$ norm:
 The $\mathcal{H}_2$ norm of a dynamical system with transfer function $H(s)$ is defined as 
$$
\|H\|_{\mathcal{H}_2}= \sqrt{\frac{1}{2\pi}\int_{-\infty}^{\infty}|H(\Imunit\omega)|^2\,d\omega}.
$$
For the full model 
\eqref{origLTIsys} and the reduced model \eqref{redLTIsys}, 
the output error satisfies
$$
\| y  - y_r \|_{L_\infty} \leq \| H - H_r \|_{\mathcal{H}_2} 
\| u \|_{L_2},
$$
where $\| y - y_r\|_{L_\infty} = \sup_{t\geq 0} \mid y(t) - y_r(t)\mid$ and $\|u\|_{L_2} = \sqrt{\int_0^\infty \mid u(t) \mid^2 dt}$. So, a reduced model that minimizes 
the $\mathcal{H}_2$ distance $\| H - H_r \|_{\mathcal{H}_2}$ is guaranteed to yield uniformly good approximations over finite energy inputs.  Therefore, it is desirable  to find a reduced model with transfer function $H_r(s)$ that minimizes the $\mathcal{H}_2$ distance, i.e., to find $H_r(s)$
such that
$$
\|H-H_r\|_{\mathcal{H}_2}= \min_{\substack{G_r\,\mbox{\tiny{stable}}\\ \mbox{\tiny{order} }G_r\leq r}}\|H-G_r\|_{\mathcal{H}_2}
$$
at least locally in a neighborhood of $H_r$. This is a heavily studied topic; see, e.g.,
\cite{wilson1970optimum,hyland1985theoptimal,zigic1993contragredient,spanos1992anewalgorithm,benner_sylvester}
for Sylvester-equation formulation 
 and
  \cite{meieriii1967approximation, Gugercin_Antoulas_Beattie:2008:IRKA, vandooren2008hom,bunse-gerstner2009hom,beattie2012realization,krajewski1995program,xu2010optimal}
 for interpolation formulation. Indeed, these two formulations are equivalent as shown in \cite{Gugercin_Antoulas_Beattie:2008:IRKA} and we focus on the interpolatory formulation.

How does the $\mathcal{H}_2$ optimality relate to Hermite interpolation? Let $\mu_1,\ldots,\mu_r$ be the eigenvalues of $\bfAr$, assumed simple. If $H_r(s)$ is an $\mathcal{H}_2$-optimal approximation to $H(s)$, then it is a Hermite interpolant to $H(s)$ at the points 
$\sigma_i = -\mu_i$, i.e.,
\begin{equation}\label{eq:Hermite-interpol}
    H_r(-\mu_i) = H(-\mu_i) \quad\mbox{and}\quad  H'_r(-\mu_i) = H'(-\mu_i),\quad \mbox{for}~i=1,2,\ldots,r.
\end{equation}
These conditions are known as Meier-Luenberger conditions for optimality \cite{meieriii1967approximation}. However, one cannot simply use $\sigma_i = -\mu_i$ in constructing $\bfV$ and $\bfW$ in \eqref{Veq}-\eqref{Weq} since $\mu_i$s are not known a priori. This requires an iteratively corrected algorithm. The iterative rational Krylov algorithm \textsf{IRKA} \cite{Gugercin_Antoulas_Beattie:2008:IRKA} as outlined in  Algorithm  \ref{ALG-HOKIE} precisely achieves this task. It reflects the intermediate interpolation points until the required optimality criterion, i.e., $\sigma_i = -\mu_i$ is met.  Upon convergence, the reduced model  is a locally optimal $\mathcal{H}_2$-approximation to \eqref{redLTIsys}. \textsf{IRKA} has been  successful in producing locally optimal reduced models at a modest cost and many variants have been proposed; see, e.g., \cite{beattie2007kbm,beattie2009trm,beattie2012realization,panzer2013,HokMag2018,poussot2011iterative,vuillemin2013h2,breiten2013near,gugercin2008isk,goyal2019time}. Moreover, it has been successfully extended to model reduction of bilinear \cite{breiten12,flagg15} and quadratic-bilinear systems \cite{benner2018h2}, two important classes of structured nonlinear systems.

\begin{algorithm}[hh] 
\caption{$(\mathbf{A}_r, \mathbf{b}_r, \mathbf{c}_r)=
\mathsf{IRKA}(\mathbf{A},\mathbf{b},\mathbf{c},r)$} \label{ALG-HOKIE}
\begin{algorithmic}[1]
\STATE Initialize shifts ${\bfsigma}=(\sigma_1,\sigma_2,\ldots,\sigma_r) \subset \mathbb{C}_{+}\equiv\{z\in\mathbb{C}\; :\; \mathrm{Re}(z) > 0\}$ that are closed (as a set) under under conjugation;
\REPEAT
\STATE
 Compute a basis of  $\mathsf{span}((\sigma_1\bfI-\mathbf{A})^{-1}\mathbf{b},
 \ldots,(\sigma_r\bfI-\mathbf{A})^{-1}\mathbf{b})$ $\rightarrow$ $\mathbf{V}$;
\STATE
 Compute a basis of  $\mathsf{span}((\sigma_1\bfI-\mathbf{A}^T)^{-1}\mathbf{c},
 \ldots,(\sigma_r\bfI-\mathbf{A}^T)^{-1}\mathbf{c})$ $\rightarrow$ $\mathbf{W}$;
\STATE $\mathbf{A}_r = (\mathbf{W}^T\mathbf{V})^{-1}\mathbf{W}^T\mathbf{A}\mathbf{V}$;
\STATE Compute the eigenvalues (reduced poles) $\bflambda(\mathbf{A}_r)=(\lambda_1(\mathbf{A}_r),\ldots,\lambda_r(\mathbf{A}_r))$ ;
\STATE Compute the (matching) distance $\zeta$ between the sets $\bflambda(\mathbf{A}_r)$ and $-\bfsigma$ ;
\STATE $\sigma_i \longleftarrow -\lambda_i(\mathbf{A}_r)$, $i=1,\ldots, r$ ;  
\UNTIL{$\zeta$ \emph{sufficiently small}}
\STATE $\mathbf{b}_r=(\mathbf{W}^T\mathbf{V})^{-1}\mathbf{W}^T\mathbf{b}$ ; $\mathbf{c}_r=\mathbf{V}^T\mathbf{c}$ ;
\STATE The reduced order model is $(\mathbf{A}_r,\mathbf{b}_r,\mathbf{c}_r)$.
\end{algorithmic}
\end{algorithm}
Our goal in this paper is not to compare model reduction techniques,  nor is it to illustrate effectiveness of \textsf{IRKA} and its variants. Refer to sources cited above for such analyses. Our main goal here is to revisit \textsf{IRKA} in its original form and reveal new connections to the pole placement problem (Section \ref{sec:poleplacement}) by a thorough analysis of the quantities involved in a special basis (Section \ref{sec:prim_basis}). This will lead to a backward stability formulation relating then to new stopping criteria (Section \ref{sec:backstab}). 

 In order to keep the discussion concise, we focus here on single-input/single-output dynamical systems, i.e., $u(t),y(t),y_r(t) \in \Real$. For detailed discussion of 
$\mathcal{H}_2$-optimal model reduction in the complementary  multi-input/multi-output case, see \cite{AntBG20,MORBook,abg10}.

\section{Structure in the primitive bases} \label{sec:prim_basis}
In Steps 3 and 4 of  \textsf{IRKA} as laid out in Algorithm \ref{ALG-HOKIE}~above, the matrices $\bfV$ and $\bfW$ are each chosen as bases for a pair of rational Krylov subspaces. The reduced model is independent of the particular bases chosen and one usually constructs them to be orthonormal.  We consider a different choice in this section, and show that if $\bfV$
and $\bfW$ are chosen instead as
\emph{primitive} bases, i.e., if  
\begin{equation} \label{primbasis}
\bfV=\begin{bmatrix} (\sigma_1 \mathbf{I} - \bfA)^{-1}\bfb & \ldots & (\sigma_r \mathbf{I} - \bfA)^{-1}\bfb\end{bmatrix} \quad \mbox{and} \quad
\bfW=\begin{bmatrix} (\sigma_1 \mathbf{I} - \bfA)^{-T}\bfc & \ldots & (\sigma_r \mathbf{I} - \bfA)^{-T}\bfc\end{bmatrix},
\end{equation}
then the state-space realization of the reduced model exhibits an important structure which forms the foundation of our analysis that follows in
Sections \ref{sec:poleplacement} and \ref{sec:backstab}. Therefore, in the rest of the paper, we use primitive bases for $\bfV$ and $\bfW$ as
given in \eqref{primbasis}. We emphasize that this does not change the reduced model $H_r(s)$; it is simply a change of basis that reveals nontrivial structure that can be exploited both in the theoretical analysis of the algorithm and for its
efficient software implementation. 
 
It is easy to check that (\!\cite{Gugercin_Antoulas_Beattie:2008:IRKA}), for $\bfV$ and $\bfW$ as primitive bases 
\eqref{primbasis}, 
the matrices $\bfW^{T}\bfA \bfV$ and
$\bfW^{T}\bfV$ are symmetric; but not necessarily Hermitian. Moreover, one may 
directly verify that (\!\cite{abg10}),
$\bfW^T\bfV$ is the Loewner matrix whose $(i,j)$th entry, for $i,j=1,\ldots,r$, is given by
\begin{equation}\label{eq:W^T*V-Loewner}
    (\bfW^{T}\bfV)_{ij}=
    [\sigma_{i},\sigma_{j}]H
    := \frac{H(\sigma_i)-H(\sigma_j)}{\sigma_i-\sigma_j},
    \end{equation}
    with the convention that $[\sigma_{i},\sigma_{i}]H = H'(\sigma_i)$.

\begin{lemma} \label{RatKryCanonForm}
Let
$\omega_{r}(z)=(z-\sigma_{1})(z-\sigma_{2})\ldots(z-\sigma_{r})$ be
the nodal polynomial associated with the shifts $\bfsigma = \{\sigma_1,\sigma_2,\ldots,\sigma_r\}$.
For any monic polynomial $p_{r}\in \mathcal{P}_{r}$, define the vector
$$
\bfq=(q_1,\ldots, q_r)^T,\;\; q_{i}=\frac{p_{r}(\sigma_{i})}{\omega_{r}'(\sigma_{i})},\;\;
i=1,\,\ldots,\,r,
$$
and the matrix $\bfAr=\bfSigma_{r}-{\bfq}\mathbf{e}^{T}$
with $\bfSigma_{r}=diag(\sigma_{1},\ldots,\sigma_{r})$. Then $\det(z\bfI-{\bfA}_{r})=p_{r}(z)$
and
\begin{eqnarray} 
    {\bfA}\bfV-\bfV{\bfA}_{r}
      &= & -p_{r}({\bfA})[\omega_{r}({\bfA})]^{-1}{\bfb}\mathbf{e}^{T} ,
    \label{ratKryRightV} \\[2mm]
    \bfW^{T}{\bfA}-{\bfA}_{r}^{T}\bfW^{T}
      &=& -\mathbf{e}{\bfc}^{T}p_{r}({\bfA})[\omega_{r}({\bfA})]^{-1}.
    \label{ratKryLeftW}
\end{eqnarray}
\end{lemma}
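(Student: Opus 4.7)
The plan is to reduce everything to two ingredients: (i) an Arnoldi-type identity satisfied by the primitive basis, and (ii) a partial-fraction expansion of $p_r(z)/\omega_r(z)$ that dovetails exactly with the vector $\bfq$.

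First I would derive the relations
\begin{equation*}
\bfA\bfV = \bfV\bfSigma_r - \bfb\,\mathbf{e}^T, \qquad \bfW^T\bfA = \bfSigma_r\bfW^T - \mathbf{e}\,\bfc^T,
\end{equation*}
by a one-line column-by-column computation: $\bfA(\sigma_i\bfI-\bfA)^{-1}\bfb = \sigma_i(\sigma_i\bfI-\bfA)^{-1}\bfb - \bfb$, and the analogous identity for $\bfA^T$. Using $\bfA_r = \bfSigma_r - \bfq\,\mathbf{e}^T$ and $\bfA_r^T = \bfSigma_r - \mathbf{e}\,\bfq^T$, it then follows directly that
\begin{equation*}
\bfA\bfV - \bfV\bfA_r = (\bfV\bfq - \bfb)\,\mathbf{e}^T, \qquad \bfW^T\bfA - \bfA_r^T\bfW^T = \mathbf{e}\,(\bfq^T\bfW^T - \bfc^T).
\end{equation*}

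The crux is then to identify $\bfV\bfq - \bfb$ with $-p_r(\bfA)[\omega_r(\bfA)]^{-1}\bfb$, and this is where the specific choice of $\bfq$ does its work. Since $p_r$ and $\omega_r$ are both monic of degree $r$ and $\omega_r$ has simple roots $\sigma_1,\ldots,\sigma_r$, the rational function $p_r(z)/\omega_r(z)$ admits the partial fraction expansion
\begin{equation*}
\frac{p_r(z)}{\omega_r(z)} = 1 + \sum_{i=1}^r \frac{p_r(\sigma_i)/\omega_r'(\sigma_i)}{z-\sigma_i} = 1 + \sum_{i=1}^r \frac{q_i}{z-\sigma_i}.
\end{equation*}
Evaluating this identity with the matrix argument $\bfA$ (which is admissible because $\bfsigma \subset \mathbb{C}_+$ avoids the spectrum of the stable $\bfA$) and applying the result to $\bfb$ yields
\begin{equation*}
p_r(\bfA)[\omega_r(\bfA)]^{-1}\bfb = \bfb - \sum_{i=1}^r q_i(\sigma_i\bfI-\bfA)^{-1}\bfb = \bfb - \bfV\bfq,
\end{equation*}
which is exactly $\bfb - \bfV\bfq$, and this plugs into \eqref{ratKryRightV}. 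The left-sided identity \eqref{ratKryLeftW} follows by the same expansion applied to $\bfA^T$ against $\bfc$, giving $\bfW\bfq = \bfc - p_r(\bfA^T)[\omega_r(\bfA^T)]^{-1}\bfc$, and then transposing (using that $p_r(\bfA)$ and $\omega_r(\bfA)^{-1}$ commute as polynomials in the same matrix).

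Finally, the characteristic polynomial claim $\det(z\bfI - \bfA_r) = p_r(z)$ is handled by a direct application of the matrix determinant lemma to the rank-one update $z\bfI - \bfA_r = (z\bfI - \bfSigma_r) + \bfq\,\mathbf{e}^T$:
\begin{equation*}
\det(z\bfI - \bfA_r) = \omega_r(z)\left(1 + \mathbf{e}^T(z\bfI - \bfSigma_r)^{-1}\bfq\right) = \omega_r(z)\left(1 + \sum_{i=1}^r \frac{q_i}{z-\sigma_i}\right) = p_r(z),
\end{equation*}
by the same partial-fraction identity. The only genuine obstacle is bookkeeping: making sure the matrix-function substitution in the partial-fraction step is justified (which it is, since all $\sigma_i$ are outside $\mathrm{spec}(\bfA)$ by the stability/positivity assumptions), and correctly handling the transpose on the $\bfW$-side so that the $p_r(\bfA)\omega_r(\bfA)^{-1}$ on the right of \eqref{ratKryLeftW} appears in the right multiplicative order.
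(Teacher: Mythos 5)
Your proof is correct, but it takes a recognizably different route from the paper's. The paper works column by column: for each $k$ it introduces the auxiliary polynomial $f_k(z)=p_r(z)-z\prod_{i\neq k}(z-\sigma_i)\in\mathcal{P}_{r-1}$, expands it exactly in the Lagrange basis at the nodes $\sigma_1,\dots,\sigma_r$, divides by $\omega_r(z)$, and then transfers the resulting scalar identity to $\bfA$ via the Riesz--Dunford contour-integral functional calculus; post- and pre-multiplying by $\bfb$ and $\bfc^T$ produces the $k$th column of \eqref{ratKryRightV} and the $k$th row of \eqref{ratKryLeftW}. You instead factor the argument into two independent, more elementary ingredients: the Sylvester-type identities $\bfA\bfV=\bfV\bfSigma_r-\bfb\bfe^T$ and $\bfW^T\bfA=\bfSigma_r\bfW^T-\bfe\bfc^T$ (standard for primitive rational Krylov bases), which reduce both assertions to the single claim $\bfV\bfq-\bfb=-p_r(\bfA)[\omega_r(\bfA)]^{-1}\bfb$, and then the partial-fraction expansion of $p_r/\omega_r$, whose residues are precisely the $q_i$, applied globally rather than per column. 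This is a genuinely cleaner decomposition: it avoids the functional calculus entirely (the substitution $z\mapsto\bfA$ is justified by the underlying polynomial identity $p_r=\omega_r+\sum_i q_i\prod_{j\neq i}(z-\sigma_j)$ followed by multiplication with $[\omega_r(\bfA)]^{-1}$, legitimate since the $\sigma_i$ avoid the spectrum of $\bfA$), and it makes transparent that the same partial-fraction identity simultaneously drives the Sylvester residuals and the determinant computation. For the characteristic polynomial both proofs use the rank-one determinant formula; the paper closes by recognizing the resulting expression as the degree-$r$ monic interpolant of $p_r$, while you close by quoting the partial-fraction identity again --- equivalent endgames. The one point worth stating explicitly if you write this up is the transpose bookkeeping on the $\bfW$ side, i.e., that $p_r(\bfA^T)[\omega_r(\bfA^T)]^{-1}=\bigl(p_r(\bfA)[\omega_r(\bfA)]^{-1}\bigr)^T$, which you correctly flag.
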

\begin{proof}
Pick any index $1\leq k \leq r$ and consider $f_{k}(z)=p_{r}(z)-z\cdot
\prod_{i\neq k}(z-\sigma_{i})$.  Evidently, $f_{k}\in \mathcal{P}_{r-1}$
and so the Lagrange interpolant on $\sigma_{1},\,\sigma_{2},\,\ldots,\,\sigma_{r}$
is exact:
$$
f_{k}(z) =
p_{r}(z)-z\cdot
\prod_{i\neq k}(z-\sigma_{i}) =\sum_{i=1}^{r}f_{k}(\sigma_{i})\frac{\omega_{r}(z)}{(z-\sigma_{i})\omega_{r}'(\sigma_{i})} .
$$
Divide by $\omega_{r}(z)$ and rearrange to obtain
\begin{equation}
    \frac{z}{\sigma_{k}-z} -
    \sum_{i=1}^{r}
    \,\left(-\frac{f_{k}(\sigma_{i})}{\omega_{r}'(\sigma_{i})}\right)
    \frac{1}{\sigma_{i}-z}= -\frac{p_{r}(z)}{\omega_{r}(z)} .
    \label{LagrangeInterp}
\end{equation}
Let $\Gamma$ be a Jordan curve that separates $\mathbb{C}$ into two
open, simply-connected sets, $\mathcal{C}_{1},\, \mathcal{C}_{2}$
with $\mathcal{C}_{1}$ containing all the eigenvalues of
${\bfA}$ and $\mathcal{C}_{2}$ containing both the point at
$\infty$ and the shifts $\{\sigma_{1},\ldots,\sigma_{r}\}$.
For any function $f(z)$ that is analytic in a compact set containing
$\mathcal{C}_{1}$, $f({A})$ can be
defined as
$
f({\bfA})=\frac{1}{2\pi\Imunit}\int_{\Gamma}f(z)\,(z\bfI-{\bfA})^{-1}\, dz .
$
Applying this to (\ref{LagrangeInterp}) gives
$$
{\bfA}(\sigma_{k}\bfI-{\bfA})^{-1}-
\sum_{i=1}^{r}
\,\left(-\frac{f_{k}(\sigma_{i})}{\omega_{r}'(\sigma_{i})}\right)
(\sigma_{i}\bfI-{\bfA})^{-1} =-p_{r}({\bfA})[\omega_{r}({\bfA})]^{-1} .
$$
Postmultiplication by ${\bfb}$ provides the $k^{th}$ column of
(\ref{ratKryRightV}), while premultiplication by ${\bfc}^{T}$
(and since $(\sigma_{k}\bfI-{\bfA})^{-1}$
commutes with ${\bfA}$) provides
the $k^{th}$ row of (\ref{ratKryLeftW}).

To compute the characteristic polynomial of $\bfA_r$, we use the alternative
factorizations\footnote{For the reader's convenience, here we actually reproduce the proof of the Sherman--Morrison determinant
formula.},
{
\begin{align*}
 \begin{pmatrix}
    z\bfI-\bfSigma_{r} & {\bfq}   \\
    \mathbf{e}^{T}  & -1
\end{pmatrix}= &\begin{pmatrix}
    \bfI & -{\bfq}  \\
    \mbox{\boldmath{$0$}}^{T} & 1
\end{pmatrix}\,
\begin{pmatrix}
    z\bfI-{\bfA}_{r} & \mbox{\boldmath{$0$}}  \\
    \mbox{\boldmath{$0$}}^{T} & -1
\end{pmatrix}\, \begin{pmatrix}
    \bfI & \mbox{\boldmath{$0$}}  \\
    -\mathbf{e}^{T} & 1
\end{pmatrix}  \\
\begin{pmatrix}
    z\bfI-\bfSigma_{r} & {\bfq}   \\
    \mathbf{e}^{T}  & -1
\end{pmatrix}= & \begin{pmatrix}
    \bfI & \mbox{\boldmath{$0$}}  \\
    \mathbf{e}^{T}(z\bfI-\bfSigma_{r})^{-1} & 1
\end{pmatrix}\,
\begin{pmatrix}
    z\bfI-\bfSigma_{r} & \mbox{\boldmath{$0$}}  \\
    \mbox{\boldmath{$0$}}^{T} &
    -a(z)
\end{pmatrix}\,\begin{pmatrix}
    \bfI & (z\bfI-\bfSigma_{r})^{-1} {\bfq} \\
    \mbox{\boldmath{$0$}} & 1
\end{pmatrix},
\end{align*}
}
where $a(z)=1+\mathbf{e}^{T}(z\bfI-\Sigma_{r})^{-1}{\bfq}$.
Then we have that
\begin{eqnarray}
   \det(z\bfI-{\bfA}_{r}) &= & \det(z\bfI-\bfSigma_{r})
    \cdot a(z)
  =\omega_{r}(z)\cdot\left(1+\mathbf{e}^{T}(z\bfI-
  \bfSigma_{r})^{-1}
  {\bfq}\right) \label{secularEqn} \\
  &=& \omega_{r}(z)
  +\sum_{i=1}^{r} p_{r}(\sigma_{i})\left(\frac{\omega_{r}(z)}
  {\omega_{r}'(\sigma_{i})\,(z-\sigma_{i})}\right)= p_{r}(z) , \nonumber
\end{eqnarray}
where the last equality follows by observing that the penultimate
expression describes a monic polynomial of degree $r$ that interpolates $p_{r}$
at $\sigma_{1},\, \sigma_{2},\, \ldots\,\sigma_{r}$. \hfill$\Box$
\end{proof}

Lemma \ref{RatKryCanonForm}, and more specifically 
\eqref{ratKryRightV} and \eqref{ratKryLeftW}, reveal the rational Krylov structure arising from the choice of $\bfV$ and $\bfW$ in \eqref{primbasis}. At first, connection of the involved quantities such as the vector $\bfq$ to \textsf{IRKA} quantities might not be clear. In the following result, we make these connections precise.
\begin{lemma}
    \label{RatKryCanFrmCor}
    Let the reduced model $H_r(s) = \bfcr^T(s\bfI-\bfA_r)^{-1}\bfbr$ be obtained by projection as in \eqref{Project} using
    the primitive bases $\bfW$ and $\bfV$ in \eqref{primbasis}. 
   Let
    $$p_{r}(z)=\det(z\, \bfW^{T}\bfV-\bfW^{T}\bfA \bfV )/\det(\bfW^{T}\bfV).$$
    Then \eqref{ratKryRightV} and \eqref{ratKryLeftW} hold with
    $\bfA_{r}=(\bfW^{T}\bfV)^{-1}\,\bfW^{T}\bfA \bfV$. In particular, $\bfb_r = (\bfW^{T}\bfV)^{-1}\,\bfW^{T}\bfb = \bfq$ and
    $\bfA_r=(\bfW^{T}\bfV)^{-1}\,\bfW^{T}\bfA\bfV =
    \bfSigma_r - \bfe \bfq^T$.
    Moreover, if $\mu_{\ell}$ is an eigenvalue of
    $\bfAr$, then
    \begin{equation}
        \mathbf{x}_{\ell}=\left(\bfSigma_{r}-\mu_{\ell}\bfI\right)^{-1}\mathbf{q}
        \label{rightRitzvector}
    \end{equation}
    is an associated (right) eigenvector of $\bfA_{r}$ for $\ell = 1,2,\ldots,r$. Similarly, the vector
    \begin{equation}
        \mathbf{x}_{\ell}^{T} \bfW^{T} \bfV=\nu_{\ell}\mathbf{e}^{T}\left(\bfSigma_{r}-\mu_{\ell}\bfI\right)^{-1}
        \label{leftRitzvector}
    \end{equation}
    is an associated left eigenvector of $\bfA_{r}$, for $\ell = 1,2,\ldots,r$, with
    $\displaystyle \nu_{\ell}=\hat{\phi}_{\ell}\frac{p_{r}'(\mu_{\ell})}{\omega_{r}(\mu_{\ell})}$, and
    $\hat{\phi}_{\ell}$ is the residue of $H_{r}(s)$ at $s=\mu_{\ell}$, i.e.,
    $\hat{\phi}_{\ell} = \lim_{s\to \mu_\ell}(s-\mu_\ell)H_r(s)$.
\end{lemma}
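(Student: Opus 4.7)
The plan is to derive the rank-one-updated structure of $\bfA_r$ directly from the primitive basis, match it against the template of Lemma \ref{RatKryCanonForm} to identify $(\bfb_r,\bfq,p_r)$, and then read off the right and left eigenvectors from the resulting explicit form. The column identity $\bfA\bfv_i = \sigma_i\bfv_i - \bfb$ (immediate from $\bfv_i = (\sigma_i\bfI-\bfA)^{-1}\bfb$) assembles into $\bfA\bfV = \bfV\bfSigma_r - \bfb\bfe^T$, and projecting yields $\bfA_r = \bfSigma_r - \bfb_r\bfe^T$ with $\bfb_r = (\bfW^T\bfV)^{-1}\bfW^T\bfb$; the characteristic polynomial of $\bfA_r$ equals $\det(z\bfW^T\bfV - \bfW^T\bfA\bfV)/\det(\bfW^T\bfV) = p_r(z)$ by the very definition of $p_r$. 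Uniqueness of the rank-one update --- established inside the proof of Lemma \ref{RatKryCanonForm} by evaluating $\det(z\bfI - (\bfSigma_r - \bfv\bfe^T))$ at each $\sigma_k$ to obtain $v_k\omega_r'(\sigma_k) = p_r(\sigma_k)$ --- then forces $\bfb_r = \bfq$. Equations \eqref{ratKryRightV} and \eqref{ratKryLeftW} follow by applying Lemma \ref{RatKryCanonForm} directly to this identified $\bfA_r$.

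For the right eigenvector, the rank-one-update form $\bfA_r = \bfSigma_r - \bfq\bfe^T$ rearranges the eigenvalue equation as $(\bfSigma_r - \mu_\ell\bfI)\bfx_\ell = (\bfe^T\bfx_\ell)\bfq$, valid under the implicit genericity $\mu_\ell\notin\{\sigma_1,\ldots,\sigma_r\}$; normalizing $\bfe^T\bfx_\ell = 1$ yields \eqref{rightRitzvector}. For the left eigenvector I would exploit the symmetries of $\bfW^T\bfA\bfV$ and $\bfW^T\bfV$ noted before the lemma: the chain $\bfW^T\bfV\bfA_r = \bfW^T\bfA\bfV = (\bfW^T\bfA\bfV)^T = \bfA_r^T\bfW^T\bfV$ shows that $\bfx_\ell^T\bfW^T\bfV$ satisfies the left-eigenvalue equation for $\bfA_r$, and the same rank-one-update argument applied to $\bfA_r^T$ forces any left eigenvector of $\bfA_r$ at $\mu_\ell$ to be a scalar multiple of $\bfe^T(\bfSigma_r - \mu_\ell\bfI)^{-1}$, producing \eqref{leftRitzvector} for some scalar $\nu_\ell$.

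To pin down $\nu_\ell$ and connect it to the residue, I would post-multiply \eqref{leftRitzvector} by $\bfb_r = \bfq$. On the left, $\bfW^T\bfV\bfb_r = \bfW^T\bfb$ collapses the expression to $\bfx_\ell^T\bfW^T\bfb$; on the right, $\bfe^T(\bfSigma_r - \mu_\ell\bfI)^{-1}\bfq = \sum_i q_i/(\sigma_i - \mu_\ell) = 1$ because the secular function $a(z) = 1 + \bfe^T(z\bfI - \bfSigma_r)^{-1}\bfq$ from the proof of Lemma \ref{RatKryCanonForm} vanishes at $\mu_\ell$. Using $(\bfW^T\bfb)_i = H(\sigma_i)$ gives $\nu_\ell = \sum_i q_iH(\sigma_i)/(\sigma_i - \mu_\ell)$, which also equals $\bfc_r^T\bfx_\ell$ since $(\bfc^T\bfV)_j = H(\sigma_j)$. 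To obtain the claimed form, I would use the spectral expansion $(s\bfI - \bfA_r)^{-1} = \sum_k \bfx_k\mathbf{y}_k^T/(s-\mu_k)$ to write $\hat\phi_\ell = (\bfc_r^T\bfx_\ell)(\mathbf{y}_\ell^T\bfb_r)$, compute the biorthogonality normalization scalar $\bfe^T(\bfSigma_r - \mu_\ell\bfI)^{-2}\bfq$ by differentiating the factorization $p_r(z) = \omega_r(z)\,a(z)$ at $\mu_\ell$ (using $a(\mu_\ell) = 0$), and substitute. The main obstacle is precisely this last substitution: the signs in $a'(\mu_\ell)$, the biorthogonality normalization, and the secular identity all interact, so the only delicate bookkeeping in the proof is keeping these signs consistent through the chain of partial-fraction manipulations anchored on $p_r = \omega_r\cdot a$.
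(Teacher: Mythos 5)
Your derivation of the companion structure and of the eigenvectors is sound and essentially coincides with the paper's: the paper reaches $\bfA_r=\bfSigma_r-\bfq\bfe^T$ by choosing $\hat{p}_r$ with $\bfW^T\hat{p}_r(\bfA)[\omega_r(\bfA)]^{-1}\bfb=0$ and then applying $\bfW^T$ to \eqref{ratKryRightV}, whereas you project the Sylvester identity $\bfA\bfV=\bfV\bfSigma_r-\bfb\bfe^T$ and pin down $\bfb_r=\bfq$ through the uniqueness of the rank-one update implicit in \eqref{secularEqn}; these are equivalent, and your solve-and-normalize argument for the right eigenvector and the symmetry argument for the left one are exactly the paper's (the paper verifies \eqref{rightRitzvector} by direct substitution rather than deriving it, a cosmetic difference). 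Where you genuinely diverge is the evaluation of $\nu_\ell$: the paper compares the $j$th entries of \eqref{leftRitzvector} via the divided-difference form \eqref{eq:W^T*V-Loewner} of $\bfW^{T}\bfV$ and exactness of Lagrange interpolation for $f(z)=p_r(z)[z,\sigma_j]H_r$, while you contract against $\bfq$, use $\bfW^T\bfV\bfq=\bfW^T\bfb$, the secular identity $\bfe^T(\bfSigma_r-\mu_\ell\bfI)^{-1}\bfq=1$, and the residue/biorthogonality decomposition. Your route is self-contained, avoids the entrywise interpolation argument entirely, and buys a cleaner identification $\nu_\ell=\bfcr^T\bfx_\ell$.

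The one thing you must know about the ``delicate bookkeeping'' you defer: carried to completion, your chain gives $\nu_\ell=\hat{\phi}_\ell\,\bfe^T(\bfSigma_r-\mu_\ell\bfI)^{-2}\bfq$, and differentiating $p_r=\omega_r\cdot a$ at $\mu_\ell$ (where $a(\mu_\ell)=0$) gives $\bfe^T(\bfSigma_r-\mu_\ell\bfI)^{-2}\bfq=-a'(\mu_\ell)\cdot(-1)^0=-p_r'(\mu_\ell)/\omega_r(\mu_\ell)$, so you land at $\nu_\ell=-\hat{\phi}_\ell\,p_r'(\mu_\ell)/\omega_r(\mu_\ell)$ --- the \emph{negative} of the stated value. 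This is not a flaw in your chain: a direct check at $r=1$, where $\bfW^T\bfV=\bfc^T(\sigma_1\bfI-\bfA)^{-2}\bfb=-H'(\sigma_1)$, confirms your sign. The discrepancy is precisely the sign by which \eqref{eq:W^T*V-Loewner} misstates $\bfW^T\bfV$: the resolvent identity yields $(\bfW^T\bfV)_{ij}=\bigl(H(\sigma_i)-H(\sigma_j)\bigr)/(\sigma_j-\sigma_i)$, the negative of the divided difference as defined there, and the paper's computation in \eqref{nuDef} leans on that identity. So your approach is valid and in fact exposes a latent sign inconsistency; just be aware that honest bookkeeping terminates at $-\hat{\phi}_\ell\,p_r'(\mu_\ell)/\omega_r(\mu_\ell)$ rather than at the lemma's stated expression, and you should say so explicitly rather than leaving the sign unresolved.
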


\begin{proof} 
Now, choose a monic polynomial
$\hat{p}_{r}\in \mathcal{P}_{r}$ so that $\bfW^{T}\hat{p}_{r}({\bfA})
[\omega_{r}({\bfA})]^{-1}\mathbf{b}=0$.  Then (\ref{ratKryRightV})
and (\ref{ratKryLeftW}) hold with an associated $\bfA_{r}
=\bfSigma_{r}-\mathbf{q}\mathbf{e}^{T}$ as given in
Lemma \ref{RatKryCanonForm}.  But then applying ${\bfW}^{T}$ to \eqref{ratKryRightV}
yields 
$\bfA_{r}=(\bfW^{T} \bfV)^{-1}\,\bfW^{T}\bfA \bfV$. This in turn
implies $\hat{p}_{r}(z)=p_{r}(z)$. 

Suppose that $\mu_{\ell}$ is an eigenvalue of $\bfA_{r}$.
Directly substitute $\mathbf{x}_{\ell}=
\left(\bfSigma_{r}-\mu_{\ell}\bfI\right)^{-1}{\bfq}$ and use
\eqref{secularEqn} with $z=\mu_{\ell}$ to obtain
\begin{eqnarray*}
\bfA_{r}\mathbf{x}_{\ell}&=& \left(\bfSigma_{r}-{\bfq}\mathbf{e}^{T}\right)
\left(\bfSigma_{r}-\mu_{\ell}\bfI\right)^{-1}{\bfq}
 =\left(\Sigma_{r}-\mu_{\ell}\bfI-{\bfq}\mathbf{e}^{T}
 +\mu_{\ell}\bfI\right)
\left(\bfSigma_{r}-\mu_{\ell}\bfI\right)^{-1}{\bfq} \\
   & = & {\bfq}\left(1-\mathbf{e}^{T}
   \left(\bfSigma_{r}-\mu_{\ell}\bfI\right)^{-1}{\bfq}\right)
   +\mu_{\ell}\left(\bfSigma_{r}-\mu_{\ell}\bfI\right)^{-1}{\bfq}
   =\mu_{\ell}\mathbf{x}_{\ell}.
\end{eqnarray*}
Thus, $\mathbf{x}_{\ell}$ is a right eigenvector for $\bfA_{r}$
associated with $\mu_{\ell}$. Note that
$\mathbf{x}_{\ell}$ also solves the generalized
eigenvalue problems:
\begin{equation}
   \ \mbox{(a)}\  \bfW^{T}\bfA \bfV\mathbf{x}_{\ell}
   =\mu_{\ell}\bfW^{T}\bfV\mathbf{x}_{\ell}
    \quad \mbox{and}\quad\mbox{(b)}\
    \mathbf{x}_{\ell}^{T}\bfW^{T}\bfA \bfV
    =\mu_{\ell}\ \mathbf{x}_{\ell}^{T}\bfW^{T}\bfV.\
    \label{eigval}
\end{equation}
(\ref{eigval}a) is immediate from the definition of $\bfA_{r}$.
(\ref{eigval}b) is obtained by transposition of (\ref{eigval}a) and using the facts that
$\bfW^{T}\bfA \bfV$ and
$\bfW^{T}\bfV$ are symmetric.
Notice that (\ref{eigval}b) shows that
$\mathbf{x}_{\ell}^{T}\bfW^{T}\bfV$ is a left eigenvector for $\bfA_{r}$
associated with $\mu_{\ell}$.  On the other hand, direct substitution also
shows that
$$
\left[\mathbf{e}^{T}\left(\bfSigma_{r}-\mu_{\ell}\bfI\right)^{-1}\right]
\bfA_{r}
=\left[\mathbf{e}^{T}\left(\bfSigma_{r}-\mu_{\ell}\bfI\right)^{-1}\right]
\left(\bfSigma_{r}-\mathbf{q}\mathbf{e}^{T}\right)=
\mu_{\ell}\left[\mathbf{e}^{T}\left(\bfSigma_{r}-\mu_{\ell}
\bfI\right)^{-1}\right],
$$
so $\mathbf{e}^{T}\left(\bfSigma_{r}-\mu_{\ell}\bfI\right)^{-1}$ is
also a left eigenvector of $\bfA_{r}$ associated with $\mu_{\ell}$.
We must have then
$$
\mathbf{x}_{\ell}^{T}\bfW^{T}\bfV=\nu_{\ell}\ \mathbf{e}^{T}
\left(\bfSigma_{r}-\mu_{\ell}\bfI\right)^{-1}
$$
for some scalar $\nu_{\ell}$ which we now determine. 
Using (\ref{eq:W^T*V-Loewner}) and (\ref{rightRitzvector}), the $j^{th}$ component of each side of
the equation can be
expressed as
\begin{equation}
\left(\mathbf{x}_{\ell}^{T}\bfW^{T}\bfV\right)_{j}=
\sum_{i=1}^{r}\frac{p_{r}(\sigma_{i})}{\omega_{r}'(\sigma_{i})}
\frac{[\sigma_{i},\sigma_{j}]H_{r}}{\sigma_{i}-\mu_{\ell}}
=\frac{\nu_{\ell}}{\sigma_{j}-\mu_{\ell}}.
    \label{nuDef}
\end{equation}
Define the function $f(z)=p_{r}(z)\, [z,\sigma_{j}]H_{r}$.  It is easily checked that $f(z)$ is
a polynomial of degree $r-1$ and so Lagrange interpolation on
$\{\sigma_{1},\,\sigma_{2},\,\ldots,\,\sigma_{r} \}$ is exact:
$$
f(z)=\sum_{i=1}^{r} \left(p_{r}(\sigma_{i})\,
[\sigma_{i},\sigma_{j}]H_{r}\right)\,
\frac{\omega_{r}(z)}{(z-\sigma_{i})\,\omega_{r}'(\sigma_{i})} .
$$
Now evaluate this expression at $z=\mu_{\ell}$:
$$
\sum_{i=1}^{r}
\frac{\left(p_{r}(\sigma_{i})\, [\sigma_{i},\sigma_{j}]H_{r}\right)\,
\omega_{r}(\mu_{\ell})}{(\mu_{\ell}-\sigma_{i})\,\omega_{r}'(\sigma_{i})}
= f(\mu_{\ell})=\lim_{z\rightarrow \mu_{\ell}}p_{r}(z)\,
    \frac{H_{r}(z)-H_{r}(\sigma_{j})}{z-\sigma_{j}}
    = \frac{p_{r}'(\mu_{\ell})\, \hat{\phi}_{\ell}}{\mu_{\ell}-\sigma_{j}},
$$
where we observe that
$\lim_{z\rightarrow\mu_{\ell}}p_{r}(z)H_{r}(z)
=\hat{\phi}_{\ell}\prod_{i\neq\ell}(\mu_{\ell}-\mu_{i})\
=\hat{\phi}_{\ell}\,p_{r}'(\mu_{\ell})$.  Comparing this expression
to (\ref{nuDef}) we find $\displaystyle \nu_{\ell}=\hat{\phi}_{\ell}
\frac{p_{r}'(\mu_{\ell})}{\omega_{r}(\mu_{\ell})}$. \hfill$\Box$
\end{proof}

Lemma \ref{RatKryCanFrmCor} illustrates that 
if the primitive bases $\bfV$ and $\bfW$ in \eqref{primbasis}  are employed in \textsf{IRKA}, then the reduced matrix $\bfAr$ at every iteration step
is  a rank-$1$ perturbation of the diagonal matrix of shifts. This matrix $\bfA_r = \bfSigma_r - \bfq\bfe^T$ is known as the generalized companion matrix. This special structure allows explicit computation of the left and right eigenvectors of
$\bfAr$ as well. The next corollary gives further details about the spectral decomposition of $\bfAr$.

\begin{corollary}  \label{EVDArArtCor}
Consider the setup in Lemma  \ref{RatKryCanFrmCor}.
Define the $r \times r$ Cauchy matrix $\bfC = \bfC(\bfsigma,\bfmu)$ as
\begin{equation} \label{CauchyC}
    \bfC_{ij} = \frac{1}{\sigma_i - \mu_j}\quad \mbox{for}~
    i,j=1,2,\ldots,r,
\end{equation}
and the $r \times r$ diagonal matrix $\bfD_\bfq = \mathrm{diag}(q_1,q_2,\ldots,q_r).$
Then $\bfAr = \bfSigma - \bfq\bfe^T$ has the
 spectral
decomposition  
\begin{equation} \label{EVDAr}
\bfAr = \bfX \bfM 
{\bfX}^{-1}\quad
\mbox{where}~
\bfM=\mathrm{diag}(\mu_1,\mu_2,\ldots,\mu_r)~\mbox{and}~
{\bfX}=\bfD_{\bfq}{\bfC}.
\end{equation}
Moreover, $\bfAr^T = \bfD_{{\bfq}}^{-1} {\bfAr}
\bfD_\bfq$ and its spectral decomposition is 
\begin{equation} \label{EVDArt}
\bfAr^T =  \bfSigma - \bfe\bfq^T = 
\bfD_{{\bfq}}^{-1} \bfAr \bfD_{{\bfq}} = 
\bfC \bfM \bfC^{-1}.
\end{equation}
\end{corollary}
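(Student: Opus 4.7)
The plan is to build on Lemma \ref{RatKryCanFrmCor}, which already supplies the right eigenvectors of $\bfAr$ in closed form, and then exploit a diagonal similarity to transfer the result to $\bfAr^T$.

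First, I would observe that formula \eqref{rightRitzvector} gives the $\ell$-th right eigenvector of $\bfAr$ as $\mathbf{x}_\ell = (\bfSigma_r - \mu_\ell\bfI)^{-1}\bfq$, whose $i$-th component is $q_i/(\sigma_i-\mu_\ell) = (\bfD_\bfq)_{ii}\,\bfC_{i\ell}$. Assembling these columns into a matrix yields $\bfX = \bfD_\bfq\bfC$ directly. Because the paper assumes the shifts $\{\sigma_i\}$ are distinct and the Ritz values $\{\mu_\ell\}$ are simple (and, for a nondegenerate iterate, disjoint from the shifts), the Cauchy matrix $\bfC$ is nonsingular by the classical Cauchy determinant formula, and $\bfD_\bfq$ is nonsingular since each $q_i = p_r(\sigma_i)/\omega_r'(\sigma_i)$ is nonzero (otherwise $\bfAr$ would reduce to a smaller companion-plus-diagonal block, contradicting simplicity). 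Hence $\bfX$ is invertible and $\bfAr = \bfX\bfM\bfX^{-1}$ follows from the eigenvector identity.

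Next, for $\bfAr^T$, I would simply transpose the rank-one update: $\bfAr^T = \bfSigma_r - \bfe\bfq^T$. The key observation is the diagonal similarity
\begin{equation*}
\bfD_\bfq^{-1}\bfAr\bfD_\bfq = \bfD_\bfq^{-1}(\bfSigma_r - \bfq\bfe^T)\bfD_\bfq = \bfSigma_r - (\bfD_\bfq^{-1}\bfq)(\bfe^T\bfD_\bfq) = \bfSigma_r - \bfe\bfq^T = \bfAr^T,
\end{equation*}
where we use $\bfD_\bfq^{-1}\bfq = \bfe$ and $\bfe^T\bfD_\bfq = \bfq^T$. Combining this with the eigendecomposition of $\bfAr$ gives
\begin{equation*}
\bfAr^T = \bfD_\bfq^{-1}\bfX\bfM\bfX^{-1}\bfD_\bfq = (\bfD_\bfq^{-1}\bfD_\bfq\bfC)\bfM(\bfD_\bfq^{-1}\bfD_\bfq\bfC)^{-1} = \bfC\bfM\bfC^{-1},
\end{equation*}
which is exactly \eqref{EVDArt}.

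The computation is essentially mechanical once the eigenvector formula from Lemma \ref{RatKryCanFrmCor} is in hand, so there is no serious obstacle. The only subtle point I would flag explicitly in the write-up is the genericity assumption $\mu_\ell \ne \sigma_i$ for all $i,\ell$, which is what guarantees that $\bfC$ has finite entries and is invertible; without it, the rank-one perturbation $\bfq\bfe^T$ would be deficient and the diagonal similarity above would break down. Under the standing assumption that the Ritz values are simple and distinct from the shifts, both spectral decompositions follow cleanly.
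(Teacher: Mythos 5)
Your proposal is correct and follows essentially the same route as the paper: read off the columns of $\bfX=\bfD_\bfq\bfC$ from \eqref{rightRitzvector}, then use the diagonal similarity $\bfD_\bfq^{-1}\bfAr\bfD_\bfq=\bfSigma_r-\bfe\bfq^T$ to obtain \eqref{EVDArt}. Your added remarks on the nonsingularity of $\bfC$ and $\bfD_\bfq$ are reasonable genericity points that the paper handles implicitly (it justifies $q_i\neq 0$ earlier via stability of $\bfAr$ rather than simplicity), and do not change the argument.
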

\begin{proof}
The spectral decomposition of $\bfAr$ in \eqref{EVDAr} directly follows from  \eqref{rightRitzvector} by observing that
$
\bfx_\ell = (\bfSigma_r - \mu_\ell \bfI)^{-1}\bfq = 
\begin{bmatrix}
\frac{q_1}{\sigma_1-\mu_\ell} &
\frac{q_2}{\sigma_2-\mu_\ell} &
\cdots &
\frac{q_r}{\sigma_r-\mu_\ell}
\end{bmatrix}^T.$ Therefore, 
the eigenvector matrix $\bfX = \begin{bmatrix}
\bfx_1 & \bfx_2 & \ldots & \bfx_r\end{bmatrix}$ can be written as 
$\bfX = \bfD_\bfq \bfC$, proving \eqref{EVDAr}. The spectral decomposition of $\bfAr^T$ in \eqref{EVDArt} can be proved similarly using 
\eqref{leftRitzvector}, i.e., the fact that $(\bfSigma_r - \mu_\ell \bfI)^{-T}\bfe$ is an eigenvector of $\bfAr^T$. Finally,
$\bfD_\bfq^{-1}\bfA_r \bfD_\bfq = \bfD_q^{-1}(\bfSigma_r - \bfq \bfe^T) \bfD_\bfq = \bfSigma_r - \bfD_q^{-1} \bfq \bfe^T \bfD_\bfq$ since
both $\bfD_q$ and $\bfSigma_r$ are diagonal. Moreover, it follows from the definition of $\bfD_\bfq = \mathrm{diag}(q_1,q_2,\ldots,q_r)$ that
$\bfD_\bfq^{-1} \bfq = \bfe$ and $\bfe^T \bfD_\bfq = \bfq$, thus completing the proof. \hfill$\Box$
\end{proof}
%%%%%%%%%%%%%%%%%%%%%%%%%%%%%%%%%%%%%%%%%%%%%%%%%%%%%%%%%%%

\section{A pole placement connection}\label{sec:poleplacement}
The main goal of this paper is to reveal the structure of the iterations in Algorithm \ref{ALG-HOKIE}, and in particular to study the limiting behaviour of the sequence of the shifts $\bfsigma^{(k)}$, $k=1, 2, \ldots$. In this section, we explore an intriguing idea to recast the computation of the shifts in Algorithm \ref{ALG-HOKIE} in a pole placement framework, and then to examine its potential for improving the convergence.

As Lemma \ref{RatKryCanFrmCor} illustrates, if the primitive bases  \eqref{primbasis}  are employed in \textsf{IRKA}, then at every step of \textsf{IRKA}, we have  $\bfA_r = \bfSigma_r - \bfq\bfe^T$. Then, in the $k$--th step, we start with the shifts $\sigma_1^{(k)},\ldots, \sigma_r^{(k)}$ and use them to
build the matrix
\begin{equation}\label{eq::A_r^k+1}
\bfA_r^{(k+1)}(\bfsigma^{(k)}) = \mathrm{diag}\left(\sigma_1^{(k)},\ldots,\sigma_r^{(k)}\right) - \bfq^{(k+1)}\bfe^T ,
\end{equation}
where the vector $\bfq^{(k+1)}$ (the reduced input in step $k$) ensures that the Hermite interpolation conditions are fulfilled, see (\ref{eq:Herm-interpol-sigma}).
If $\sigma_i^{(k)}$ is real, then $q_i^{(k+1)}$ is real as well; if for some $i\neq j$ $\sigma_i^{(k)} = \overline{\sigma_j^{(k)}}$, then $q_i^{(k+1)} = \overline{q_j^{(k+1)}}$. As a consequence, $\bfA_r^{(k+1)}$ is similar to a real matrix and its eigenvalues will remain closed under complex conjugation. Further, if some $q_i^{(k+1)}=0$, then the corresponding $\sigma_i^{(k)}$ is an eigenvalue of $\bfA_r^{(k+1)}(\bfsigma^{(k)})$; thus, if we assume that the shifts are in the open right half-plane and that  $\bfA_r^{(k+1)}(\bfsigma^{(k)})$ is stable, then $q_i^{(k+1)}\neq 0$ for all $i$.
Then, the new set of shifts is defined as 
\begin{equation}\label{eq:IRKA-map}
 \bfsigma^{(k+1)} = - \bfmu^{(k+1)},\;\;\mbox{where}\;\; \bfmu^{(k+1)} = \mathrm{eig}(\bfA_r^{(k+1)}(\bfsigma^{(k)})), 
\end{equation}
where $\mathrm{eig}(\cdot)$ is a numerical algorithm that computes the eigenvalues and returns them in some order.\footnote{Since the matrix is a rank one perturbation of the diagonal matrix, all eigenvalues can be computed in $O(r^2)$ operations by specially tailored algorithms.} In the limit as $k\rightarrow\infty$ the shifts should satisfy 
(\ref{eq:Herm-interpol-sigma}) and (\ref{eq:Hermite-interpol}).

%%%%%%%%%%%%%%%%%%%%%%%%%%%%%%%%%%%%%%%%%%%%%%%%%%%%%%%%%%%%%%%

\subsection{Measuring numerical convergence}\label{SS=Matching+example}
Numerical convergence in an implementation of Algorithm \ref{ALG-HOKIE} is declared if $\bfsigma^{(k+1)} \approx\bfsigma^{(k)}$, where the distance between two consecutive sets of shifts is measured using the optimal matching\footnote{The shifts (eigenvalues) are naturally considered as equivalence classes in $\mathbb{C}^r/\mathbb{S}_r$.}
$$
d(\bfsigma^{(k+1)}, \bfsigma^{(k)} ) =
\min_{\pi\in\mathbb{S}_r}\max_{i=1:r}|\bfsigma^{(k+1)}_{\pi(i)} -
\bfsigma^{(k)}_i| ,\;\;\mbox{where $\mathbb{S}_r$ denotes the symmetric group.}
$$
In an implementation, it is convenient to use the easier to compute Hausdorff distance 
$$
h(\bfsigma^{(k+1)}, \bfsigma^{(k)} ) = \max\left\{ 
\max_{j}\min_{i} \left|\bfsigma^{(k+1)}_{j} -
\bfsigma^{(k)}_i\right| ,  \max_{i}\min_{j} \left|\bfsigma^{(k)}_{i} -
\bfsigma^{(k+1)}_j\right|\right\},
$$
for which $h(\bfsigma^{(k+1)}, \bfsigma^{(k)} ) \leq d(\bfsigma^{(k+1)}, \bfsigma^{(k)} )$, so the stopping criterion (Line 9. in Algorithm \ref{ALG-HOKIE}) must be first satisfied in the Hausdorff metric.

Numerical evidence shows that many scenarios are possible during the iterations in Algorithm \ref{ALG-HOKIE} -- from swift to slow convergence. Characterizing the limit behavior in general is an open problem; in the case of symmetric system local convergence is established in \cite{FLAGG2012688}. Moreover, we have also encountered miss--convergence in form of the existence of at least two accumulation points that seem to indicate existence of periodic points of the mapping (\ref{eq:IRKA-map}). This is illustrated in the following example.
\begin{example}\label{EX-CDP-loop}
We take the matrix $\bfA\in \Real^{120 \times 120}$ from the CD player benchmark example \cite{ChaV2005,DraSB92}  from the NICONET  benchmark collection \cite{NICONET}
and set $\bfb=\bfc=\bfe$. With a particular set of $r=29$ initial shifts, we obtained separate behaviours for the odd and the even iterates, as shown in Figure \ref{FIG_looped-shifts}.  
\begin{figure}[!ht]
\begin{center}
\includegraphics[width=0.79\linewidth, height = 3.2in]{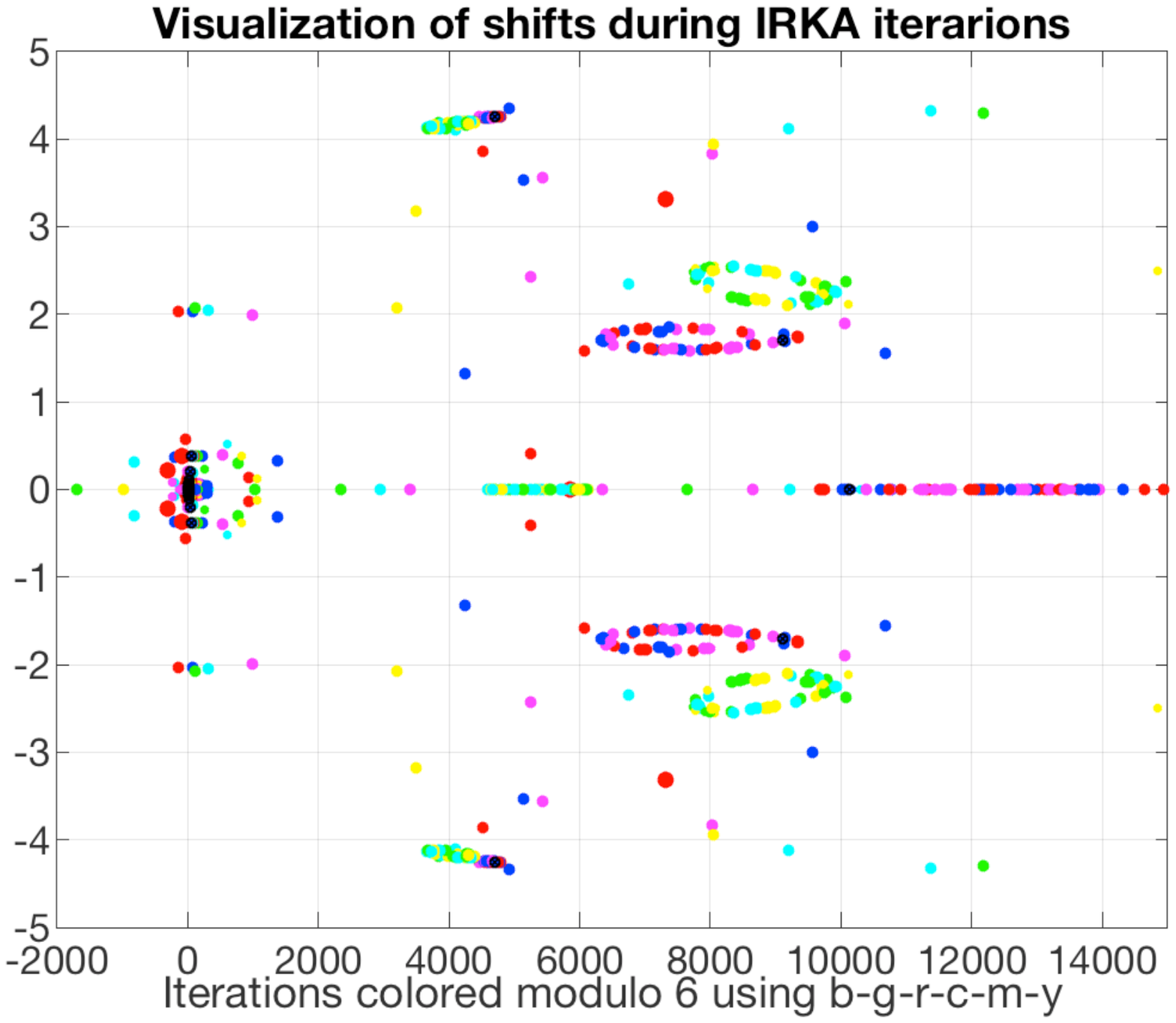}
\end{center}
\caption{(Example \ref{EX-CDP-loop}) The history of the shifts obtained using Algorithm \ref{ALG-HOKIE}.
{The iterations are colored  using six colors periodically as follows: $\textcolor[rgb]{0.00,0.00,1.00}{\bullet}$
$\textcolor[rgb]{0.00,1.00,0.00}{\bullet}$
$\textcolor[rgb]{0.98,0.00,0.00}{\bullet}$
$\textcolor[rgb]{0.00,1.00,1.00}{\bullet}$
$\textcolor[rgb]{1.00,0.00,1.00}{\bullet}$
$\textcolor[rgb]{1.00,1.00,0.00}{\bullet}$
$\textcolor[rgb]{0.00,0.00,1.00}{\bullet}$
$\textcolor[rgb]{0.00,1.00,0.00}{\bullet}$
$\textcolor[rgb]{0.98,0.00,0.00}{\bullet}$
$\textcolor[rgb]{0.00,1.00,1.00}{\bullet}$
$\textcolor[rgb]{1.00,0.00,1.00}{\bullet}$
$\textcolor[rgb]{1.00,1.00,0.00}{\bullet}, \ldots$ Note how the odd and the even iterates build two separated pairs of "smoke rings" (abscissa range $[6000,10000]$); more smaller rings can be identified in the abscissa range $[3000,5000]$. The shifts do not converge to a fixed point, but the difference between the two sub-sequences (the even and the odd indices) converges to a nonzero value.}
\label{FIG_looped-shifts} }
\end{figure}
\end{example}
\noindent Hence, it is of both theoretical and practical interest to explore possibilities for improving the convergence. Supplying good initial shift is certainly beneficial, and in \cite{DGB-VF-Q-1,DGB-VF-Q-2} we show that the less expensive Vector Fitting algorithm can be used for  preprocessing/preconditioning to generate good shifts that are then forwarded to \textsf{IRKA} to advance them to a local optimum.

An alternative course of action is to deploy an additional control in the iterations which will keep steering the shifts toward the desired positions. In fact, an example of such an intervention has  been already used in the numerical implementation of Algorithm \ref{ALG-HOKIE}. Namely, it can happen that in some steps the matrix (\ref{eq::A_r^k+1}) is not stable and some of its eigenvalues (\ref{eq:IRKA-map}) are in the right half-plane. To correct this situation, the unstable ones (real or complex-conjugate pair(s)) are flipped across the imaginary axis, so that the new shifts $\bfsigma^{(k+1)}$ stay in the right-half plane. 

This is an explicit (brute force)  post-festum reassignment of the eigenvalues to correct for stability. In \cite{DGB-VF-Q-1}, we showed that such a step (in the framework of Vector Fitting) can be recast as  pole placement. Now that we have resorted (implicitly) to the pole placement mechanism, we can think of using it as a proactive strategy for improving convergence. 
In the rest of this section, we explore this idea and discover interesting connections with some variations of  \textsf{IRKA}.

\subsection{Reduced input-to-state vector as a pole placement feedback vector}
Motivated by the above discussion,   we reinterpret  (a posteriori) the vector  $\bfq^{(k+1)}$ in \eqref{eq::A_r^k+1} as the feedback vector that reallocates the eigenvalues\footnote{We tacitly assume that throughout the iterations all shifts are simple.} of $\mathrm{diag}(\bfsigma^{(k)})$ into $\bfmu^{(k+1)}$; in other words 
we view \eqref{eq::A_r^k+1} as a pole-placement problem.
Then we can use the uniqueness argument and write 
$\bfq^{(k+1)}$
explicitly as (see \cite{mehrmann1996app}, \cite{mehrmann1998cps}) 
\begin{equation}\label{eq:MXu-q}
-q^{(k+1)}_i = \frac{\prod_{j=1}^r(\sigma^{(k)}_i - \mu^{(k+1)}_j)}{\prod_{\stackrel{j=1}{j\neq i}}^r(\sigma^{(k)}_i - \sigma^{(k)}_j)} = (\sigma^{(k)}_i - \mu^{(k+1)}_i)\prod_{\stackrel{j=1}{j\neq i}}^r \frac{\sigma^{(k)}_i - \mu^{(k+1)}_j}{\sigma^{(k)}_i - \sigma^{(k)}_j} ,\; i = 1, \ldots, r.
\end{equation}
On the other hand, for the fulfillment of
the necessary conditions for optimality, besides the Hermite interpolation  built in (\ref{eq:MXu-q}), the additional fixed point condition should hold:
\begin{equation}\label{eq::reflect_poles}
\mathrm{eig}(\bfA_r^{(k+1)}(\bfsigma^{(k)})) \approx -\bfsigma^{(k)}.
\end{equation}
The latter is what we hope to reach with the equality in
the limit as $k\rightarrow\infty$, and in practice up to a reasonable tolerance, see \S \ref{SS=Limitations-fin-prec}. If we consider the condition (\ref{eq::reflect_poles}) as an eigenvalue assignment problem,
and think of the vector $\bfq^{(k+1)}$ in (\ref{eq::A_r^k+1}) simply as the feedback vector, then
(\ref{eq::reflect_poles}) can be satisfied in one step, provided we drop the interpolation condition
and use an appropriate feedback $\bff^{(k+1)}$ vector instead of $\bfq^{(k+1)}$.
The feedback $\bff^{(k+1)}$ can be constructed explicitly (see \cite{mehrmann1996app,mehrmann1998cps}) as
\begin{equation}\label{eq::f^k+1}
f^{(k+1)}_i = -2\sigma^{(k)}_i\prod_{\stackrel{j=1}{j\neq i}}^{r}
\frac{\sigma^{(k)}_i+\sigma^{(k)}_j}{\sigma^{(k)}_i - \sigma^{(k)}_j}, \;\;i=1,\ldots, r.
\end{equation}
Of course, the above formula is a special case of (\ref{eq:MXu-q}), where we reflect the poles. However, if at a particular step some of the eigenvalues are unstable, we should not reflect the corresponding shifts. This means that in an implementation, we may apply only  partial pole placement.  
Hence, altogether,  it would make sense to interweave interpolation and eigenvalue assignment by combining
$\bff^{(k+1)}$ and $\bfq^{(k+1)}$ using an appropriately chosen parameter
$\alpha_k \in [0,1]$, and thus obtain a modified
iteration step, as outlined in Algorithm \ref{ALG-IRKA+PP}. 
{To incorporate this new shift-updating scheme into \textsf{IRKA}, Algorithm \ref{ALG-IRKA+PP} should replace Step 8 in Algorithm \ref{ALG-HOKIE}.}
\begin{algorithm}[hh]
\caption{\textsf{IRKA} + \textsf{pole placement} for shift updates; $k$th step} \label{ALG-IRKA+PP}
\begin{algorithmic}[1]
\STATE Compute the reduced input vector $\bfq^{(k+1)}$.
\STATE Compute the feedback vector $\bff^{(k+1)}$ using (\ref{eq::f^k+1}). (Keep track of stability.)
\STATE $\breve{\bfq}^{(k+1)} = \alpha_k \bfq^{(k+1)} + (1-\alpha_k)\bff^{(k+1)}$, with an appropriate $\alpha_k\in [0,1]$.
\STATE $\breve{\bfA}_r^{(k+1)}(\bfsigma^{(k)}) = \mathrm{diag}(\sigma_i^{(k)})_{i=1}^r - \breve{\bfq}^{(k+1)}\bfe^T$.
\STATE The new shifts are $\bfsigma^{(k+1)} = - \mathrm{eig}(\breve{\bfA}_r^{(k+1)}(\bfsigma^{(k)}))$.
\end{algorithmic}
\end{algorithm}

\begin{proposition}
For the real  LTI system \eqref{origLTIsys}, and $\bfsigma^{(k)}$ closed under complex conjugation,  the matrix $\breve{\bfA}_r^{(k+1)}(\bfsigma^{(k)})$ is similar to a real matrix and, thus, $\bfsigma^{(k+1)}$ remains closed under complex conjugation.
\end{proposition}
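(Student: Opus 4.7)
The plan is to verify the required conjugation-closure for the modified feedback vector $\breve{\bfq}^{(k+1)}$, and then to deduce real similarity of $\breve{\bfA}_r^{(k+1)}(\bfsigma^{(k)})$ via a block-structured change of basis.

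First, let $\pi$ be the involution on $\{1,\ldots,r\}$ with $\sigma_{\pi(i)}^{(k)}=\overline{\sigma_i^{(k)}}$, and let $P$ be the corresponding permutation matrix; write $\bfSigma_r^{(k)}=\mathrm{diag}(\bfsigma^{(k)})$. Then $P\bfSigma_r^{(k)}P=\overline{\bfSigma_r^{(k)}}$, and the paragraph following \eqref{eq::A_r^k+1} already records the analogous property $\overline{\bfq^{(k+1)}}=P\bfq^{(k+1)}$ inherited from the Hermite interpolation construction. I would establish the same property for $\bff^{(k+1)}$ directly from \eqref{eq::f^k+1}: conjugating the formula for $f_i^{(k+1)}$ and reindexing the product via the involution $\ell\mapsto\pi(\ell)$ (which still omits $\pi(i)$) yields $\overline{f_i^{(k+1)}}=f_{\pi(i)}^{(k+1)}$, i.e., $\overline{\bff^{(k+1)}}=P\bff^{(k+1)}$. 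Since $\alpha_k\in[0,1]$ is real, $\breve{\bfq}^{(k+1)}$ inherits the conjugation property $\overline{\breve{\bfq}^{(k+1)}}=P\breve{\bfq}^{(k+1)}$.

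Next, because $\bfe$ is the all-ones vector, $P\bfe=\bfe$ and $\bfe^T P=\bfe^T$. Combining with the previous identities gives
$$
P\,\breve{\bfA}_r^{(k+1)}(\bfsigma^{(k)})\,P = P\bfSigma_r^{(k)}P - (P\breve{\bfq}^{(k+1)})(\bfe^T P) = \overline{\bfSigma_r^{(k)}} - \overline{\breve{\bfq}^{(k+1)}}\,\bfe^T = \overline{\breve{\bfA}_r^{(k+1)}(\bfsigma^{(k)})}.
$$
To convert this $P$-symmetry into a genuine real similarity, I build $T\in\mathbb{C}^{r\times r}$ block-wise: place the standard basis vector $\bfe_i$ in column $i$ for each fixed point $i$ of $\pi$, and for each $2$-cycle $\{i,j\}$ place $\bfe_i+\bfe_j$ and $\mathfrak{i}(\bfe_i-\bfe_j)$ in columns $i$ and $j$. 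A column-by-column check shows $\overline{T}=PT$, and the $2\times 2$ block determinants confirm $T$ is invertible. Using $P^2=I$, one then gets
$$
\overline{T^{-1}\breve{\bfA}_r^{(k+1)}T} = \overline{T}^{-1}\,\overline{\breve{\bfA}_r^{(k+1)}}\,\overline{T} = (PT)^{-1}\bigl(P\,\breve{\bfA}_r^{(k+1)}\,P\bigr)(PT) = T^{-1}\breve{\bfA}_r^{(k+1)}T,
$$
so $T^{-1}\breve{\bfA}_r^{(k+1)}T$ is a real matrix. Its spectrum, and hence $\bfsigma^{(k+1)}=-\mathrm{eig}(\breve{\bfA}_r^{(k+1)})$, is therefore closed under complex conjugation.

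The main obstacle, though fairly mild, is the careful reindexing needed to verify $P$-symmetry of $\bff^{(k+1)}$ from \eqref{eq::f^k+1}; once that is in hand, the rest reduces to a standard block-diagonal change of basis that pairs each complex-conjugate shift with its partner and decouples the pair into real and imaginary components.
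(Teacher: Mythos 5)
Your proposal is correct and follows the same route as the paper: it derives the conjugation symmetry $\overline{f_i^{(k+1)}}=f_{\pi(i)}^{(k+1)}$ directly from \eqref{eq::f^k+1}, invokes the analogous property of $\bfq^{(k+1)}$ and the reality of $\alpha_k$, and concludes real similarity. The only difference is that you make explicit the similarity transformation $T$ that the paper leaves implicit, which is a welcome (and correctly executed) addition rather than a departure.
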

\begin{proof}
From (\ref{eq::f^k+1}), we conclude that $f_i^{(k+1)}$ is real if $\sigma_i^{(k)}$ is real. Further, if for some $i\neq j$ $\sigma_i^{(k)} = \overline{\sigma_j^{(k)}}$, then $f_i^{(k+1)} = \overline{f_j^{(k+1)}}$. We have already concluded that $\bfq^{(k+1)}$ has an analogous structure. Since $\alpha_k$ is real, $\breve{\bfA}_r^{(k+1)}(\bfsigma^{(k)})$ is similar to a real matrix. \hfill$\Box$
\end{proof}

It remains an open problem how to chose the coefficients $\alpha_k$ adaptively and turn Algorithm \ref{ALG-IRKA+PP} into a robust black-box scheme. We now show an interesting connection that might provide some guidelines. 

\subsection{Connection to the Krajewski--Viaro scheme}
Improving the convergence of fixed point iterations is an important topic. 
In general, the fixed point problem $f(x)=x$ can be equivalently solved as
the  problem
$$
f_\alpha(x) = x,\;\;\mbox{where}\;\;f_\alpha(x) = \alpha f(x) + (1-\alpha)x,\;\;\alpha\neq 0,
$$
where the parameter $\alpha$ is used, e.g., to modify eigenvalues of the corresponding Jacobian.
This is a well--known technique (Mann iteration), with many variations.
In the context of $\mathcal{H}_2$ model reduction, this scheme has been successfully
applied by Ferrante, Krajewski, Lepschy and Viarro \cite{Ferrante_Krajewski...:1999:Convergent_alg_L2},
and Krajewski and Viaro \cite{Krajewski_Viaro:2009:Iterative_Interpolation__alg_L2}.
Concretely, Krajewski and Viaro \cite[Algorithm 4.]{Krajewski_Viaro:2009:Iterative_Interpolation__alg_L2} 
propose a modified step for the \textsf{IRKA} procedure, outlined in Algorithm \ref{ALG-KR-VI}:

\begin{algorithm}[hh]
\caption{Krajewski-Viaro scheme for shift updates; $k$th step} \label{ALG-KR-VI}
\begin{algorithmic}[1]
\STATE Let $\breve{\bfwp}_r^{(k)}$ be the modified (monic) polynomial, whose reflected zeros are the
shifts $\bfsigma^{(k)}$, i.e. $\breve{\bfwp}_r^{(k)}(-\sigma^{(k)}_i)=0$, $i=1,\ldots, r$.
\STATE Compute
$\bfA_r^{(k+1)} = (\bfW_k^T \bfV_k)^{-1}\bfW_k^T \bfA \bfV_k$ and the coefficients of its characteristic polynomial $\bfwp_r^{(k+1)}$.
Write this mapping as $\bfwp_r^{(k+1)} = \Phi(\breve{\bfwp}_r^{(k)})$.
\STATE Define the new polynomial
\begin{equation}\label{Krajewski-pk+1}
\breve{\bfwp}_r^{(k+1)} = \alpha \bfwp_r^{(k+1)} + (1-\alpha) \breve{\bfwp}_r^{(k)} \equiv \Phi_{\alpha}(\breve{\bfwp}_r^{(k)}),
\end{equation}
where the linear combination of the polynomials is formed using their coefficients.
\STATE The new shifts are then the reflected roots of $\breve{\bfwp}_r^{(k+1)}$.
\end{algorithmic}
\end{algorithm}

\noindent Krajewski and Viaro \cite{Krajewski_Viaro:2009:Iterative_Interpolation__alg_L2} do not elaborate the details of computing the coefficients of the
characteristic polynomials of the reduced matrix $(\bfW_k^T \bfV_k)^{-1}\bfW_k^T \bfA \bfV_k$. From a numerical point of view, this is not feasible, not even for moderate dimensions. Computing coefficients of the characteristic polynomial by, e.g., the classical Faddeev-Leverrier trace formulas is both too expensive ($O(r^4)$) and too ill-conditioned. A modern approach would reduce the matrix to a Schur or Hessenberg form and then exploit the triangular or, respectively, Hessenberg structure. However, after completing all those (tedious) tasks, the zeros of $\breve{\bfwp}_r^{(k+1)}$ in Line 4 are best computed by transforming the problem into an eigenvalue computation for an appropriate companion matrix.
Ultimately, this approach is only conceptually interesting as a technique for improving convergence, and in this form it is applicable only for small values of $r$. 

We now show that when represented in a proper basis, this computation involving characteristic polynomials becomes rather elegant and simple,
and further provides interesting insights. In particular, in this proper basis, Algorithm \ref{ALG-KR-VI} is equivalent to Algorithm \ref{ALG-IRKA+PP}.
\begin{theorem}
In the Lagrange basis of $\omega_r^{(k)} + \mathcal{P}_{r-1}$, the Krajewski--Viaro iteration is
{equivalent to}
the ``\textsf{IRKA} + pole placement"  iteration of 
Algorithm \ref{ALG-IRKA+PP}.
\end{theorem}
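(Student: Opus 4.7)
The plan is to use the bijection from Lemma \ref{RatKryCanonForm} between monic polynomials $p \in \omega_r^{(k)} + \mathcal{P}_{r-1}$ and rank-one corrections $\bfSigma_r - \bfq\bfe^T$ of the shift matrix, with $\bfq$ playing the role of the Lagrange coordinates of $p$ on $\bfsigma^{(k)}$ via $q_i = p(\sigma_i^{(k)})/(\omega_r^{(k)})'(\sigma_i^{(k)})$. The key property I will exploit is that this map is an \emph{affine} isomorphism: because $\omega_r^{(k)} + \mathcal{P}_{r-1}$ is a translate of the $r$-dimensional space $\mathcal{P}_{r-1}$, a monic polynomial of degree $r$ is determined by its $r$ values at the nodes $\bfsigma^{(k)}$, and, crucially, roots of $p$ are sent to eigenvalues of the associated matrix.

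Under this correspondence I would identify both polynomial inputs in Algorithm \ref{ALG-KR-VI} with both vector inputs of Algorithm \ref{ALG-IRKA+PP}. On the one hand, Lemma \ref{RatKryCanFrmCor} already states that the characteristic polynomial $\bfwp_r^{(k+1)}$ of $\bfA_r^{(k+1)}$ produced in Step~2 of Algorithm \ref{ALG-KR-VI} corresponds to the reduced input $\bfq^{(k+1)}$ produced in Step~1 of Algorithm \ref{ALG-IRKA+PP}. On the other hand, a direct evaluation of the reflection polynomial $\breve{\bfwp}_r^{(k)}(z) = \prod_i(z + \sigma_i^{(k)})$ at $\sigma_i^{(k)}$ and division by $(\omega_r^{(k)})'(\sigma_i^{(k)})$ yields (in the sign convention of \eqref{eq::f^k+1}) exactly the pole-placement feedback $\bff^{(k+1)}$; indeed, the closed-form expressions \eqref{eq::f^k+1} and \eqref{eq:MXu-q} are nothing other than Lagrange coordinates of a monic degree-$r$ polynomial written in terms of its roots.

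Finally I would close the loop by linearity. The Krajewski--Viaro update \eqref{Krajewski-pk+1} is an \emph{affine} combination, $\alpha+(1-\alpha)=1$, so it preserves the monic leading coefficient and remains in $\omega_r^{(k)} + \mathcal{P}_{r-1}$; because evaluation at each $\sigma_i^{(k)}$ is linear, the Lagrange coordinates of $\breve{\bfwp}_r^{(k+1)}$ form the same affine combination of the Lagrange coordinates of $\bfwp_r^{(k+1)}$ and $\breve{\bfwp}_r^{(k)}$, which is precisely $\breve{\bfq}^{(k+1)} = \alpha\bfq^{(k+1)} + (1-\alpha)\bff^{(k+1)}$ from Step~3 of Algorithm \ref{ALG-IRKA+PP}. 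Applying Lemma \ref{RatKryCanonForm} in the forward direction identifies $\breve{\bfwp}_r^{(k+1)}$ with the characteristic polynomial of $\breve{\bfA}_r^{(k+1)} = \bfSigma_r - \breve{\bfq}^{(k+1)}\bfe^T$, so its roots coincide with the eigenvalues of $\breve{\bfA}_r^{(k+1)}$ and reflecting them yields the same $\bfsigma^{(k+1)}$ in both algorithms. The only nontrivial obstacle I anticipate is the middle bookkeeping step -- matching \eqref{eq::f^k+1} with the Lagrange coordinates of $\breve{\bfwp}_r^{(k)}$ -- since that is the place where the abstract bijection meets the explicit pole-placement formula; everything else is an immediate consequence of the single structural fact that passage to Lagrange coordinates is a linear isomorphism intertwining roots with eigenvalues, which also explains why this reformulation bypasses the $O(r^4)$ manipulation of characteristic-polynomial coefficients flagged in the discussion preceding the theorem.
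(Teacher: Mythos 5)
Your proposal is correct and follows essentially the same route as the paper: both identify a monic polynomial $p\in\omega_r^{(k)}+\mathcal{P}_{r-1}$ with its Lagrange coordinates $q_i=p(\sigma_i^{(k)})/(\omega_r^{(k)})'(\sigma_i^{(k)})$ via Lemma~\ref{RatKryCanonForm}, check that $\breve{\bfwp}_r^{(k)}$ maps to the feedback vector of \eqref{eq::f^k+1} by evaluating $\prod_j(\sigma_i^{(k)}+\sigma_j^{(k)})/\prod_{j\neq i}(\sigma_i^{(k)}-\sigma_j^{(k)})$, and then use linearity of evaluation at the nodes to match the affine combination \eqref{Krajewski-pk+1} with Step~3 of Algorithm~\ref{ALG-IRKA+PP}. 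The paper simply writes out these Lagrange expansions explicitly where you phrase them as an affine isomorphism intertwining roots with eigenvalues, so the two arguments are the same in substance.
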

\begin{proof}
Note that by Lemma \ref{RatKryCanonForm}  we can write $\bfA_r^{(k+1)} = \diag(\sigma^{(k)}_i) - \bfq^{(k+1)}\bfe^T$, where 
$q^{(k+1)}_i = \bfwp_r^{(k+1)}(\sigma_i)/(\omega_r^{(k+1)})'(\sigma_i)$, $i=1,\ldots, r$, and $\bfwp_r^{(k+1)}(z)$ is the characteristic polynomial of $\bfA_r^{(k+1)}$.
Further, using Lemma \ref{RatKryCanonForm}, we can write  $\bfwp_r^{(k+1)}(z)$ as 
\begin{eqnarray*}
\bfwp_r^{(k+1)}(z) &=& \omega_r^{(k)}(z) + \sum_{i=1}^r \bfwp_r^{(k+1)}(\sigma^{(k)}_i) \frac{\omega_r^{(k)}(z)}{(\omega_r^{(k)})'(\sigma^{(k)}_i) (z-\sigma^{(k)}_i)}\\
&=& \omega_r^{(k)}(z) + \sum_{i=1}^r q^{(k+1)}_i \frac{\omega_r^{(k)}(z)}{z-\sigma^{(k)}_i},\;\;
\omega_r^{(k)}(z) = \prod_{i=1}^r (z-\sigma^{(k)}_i) .
\end{eqnarray*}
If we consider the monic polynomials of degree $r$ as the linear manifold $\omega _r^{(k)}+ \mathcal{P}_{r-1}$, and
fix in $\mathcal{P}_{r-1}$ the Lagrange basis with the nodes $\sigma^{(k)}_i$, $i=1,\ldots, r$, then
\begin{eqnarray*}
\breve{\bfwp}_r^{(k)}(z) &=& \omega_r^{(k)}(z) + \sum_{i=1}^r \breve{\bfwp}_{k}(\sigma^{(k)}_i) \frac{\omega_r^{(k)}(z)}{(\omega_r^{(k)})'(\sigma^{(k)}_i) (z-\sigma^{(k)}_i)}\\
 &=& \omega_r^{(k)}(z) + \sum_{i=1}^r \frac{\prod_{j=1}^r (\sigma^{(k)}_i + \sigma^{(k)}_j)}{\prod_{\stackrel{j=1}{j\neq i}}^r (\sigma^{(k)}_i - \sigma^{(k)}_j)}  \frac{\omega_r^{(k)}(z)}{z-\sigma^{(k)}_i} = \omega_r^{(k)}(z) + \sum_{i=1}^r f^{(k+1)}_i \frac{\omega_r^{(k)}(z)}{z-\sigma^{(k)}_i},
\end{eqnarray*}
where we used that $\breve{\bfwp}_{r}^{(k)}(\sigma^{(k)}_i )= \prod_{j=1}^r (\sigma^{(k)}_i + \sigma^{(k)}_j)$,  
$(\omega_r^{(k)})'(\sigma^{(k)}_i)=\prod_{\stackrel{j=1}{j\neq i}}^r (\sigma^{(k)}_i - \sigma^{(k)}_j)$ and that the feedback vector (\ref{eq::f^k+1}) can be written as 
$$
\frac{\prod_{j=1}^r (\sigma^{(k)}_i + \sigma^{(k)}_j)}{\prod_{\stackrel{j=1}{j\neq i}}^r (\sigma^{(k)}_i - \sigma^{(k)}_j)} = 2 \sigma^{(k)}_i
\prod_{\stackrel{j=1}{j\neq i}}^r \frac{(\sigma^{(k)}_i + \sigma^{(k)}_j)}{(\sigma^{(k)}_i - \sigma^{(k)}_j)} = f_i^{(k+1)}.
$$
Hence, $\breve{\bfwp}_r^{(k)}$ is the characteristic polynomial of   $\mathrm{diag}(\bfsigma^{(k)}) - \bff^{(k+1)}\bfe^T$, and we have further
\begin{eqnarray}
\breve{\bfwp}_r^{(k+1)}(z) &=& \omega_r^{(k)}(z) + \sum_{i=1}^r (\alpha_k q^{(k+1)}_i + (1-\alpha_k)f^{(k+1)}_i) \frac{\omega_r^{(k)}(z)}{z-\sigma^{(k)}_i} \\
&=& \omega_r^{(k)}(z) + \sum_{i=1}^r \frac{\alpha_k \bfwp_r^{(k+1)}(\sigma^{(k)}_i) + (1-\alpha_k)\breve{\bfwp}_r^{(k)}(\sigma^{(k)}_i)}{(\omega_r^{(k)})'(\sigma^{(k)}_i)} \frac{\omega_r^{(k)}(z)}{z-\sigma^{(k)}_i} \\
&=& \omega_r^{(k)}(z) + \sum_{i=1}^r \frac{\breve{p}_r^{(k+1)}(\sigma^{(k)}_i)}{(\omega_r^{(k)})'(\sigma^{(k)}_i)} \frac{\omega_r^{(k)}(z)}{z-\sigma^{(k)}_i},
\end{eqnarray}
which implies that $\breve{\bfwp}_r^{(k+1)}$ is the characteristic polynomial of the matrix
$\breve{\bfA}_r^{(k+1)}(\bfsigma^{(k)})$, represented by the vector $\breve{\bfq}^{(k+1)}$ from Lines 3 and  4 of Algorithm \ref{ALG-IRKA+PP}. This follows from the proof of Lemma \ref{RatKryCanonForm}. \hfill$\Box$
\end{proof}

\noindent Krajewski and Viaro \cite{Krajewski_Viaro:2009:Iterative_Interpolation__alg_L2} use a fixed value of the parameter $\alpha$, and show that different (fixed) values
may lead to quite different convergence behavior. This modification can turn a non-convergent process into a convergent one,
but it can also slow down already convergent one.\footnote{We should point out here that the dimensions $n$ and $r$ are
rather small in all reported numerical experiments in \cite{Krajewski_Viaro:2009:Iterative_Interpolation__alg_L2}.}
Following the discussion from
\cite{Ferrante_Krajewski...:1999:Convergent_alg_L2}, $\alpha$ is best chosen to move the smallest
eigenvalue of the Jacobian of $\Phi_\alpha$ (evaluated at the fixed point of $\Phi$) into the interval $(-1,1)$.
This does not seem to be a simple task as it requires estimates of the eigenvalues of the (estimated)
Jacobian. Another option is to try different values of $\alpha$ in an iterative reduced order model design.

This equivalence of the schemes in Algorithm \ref{ALG-IRKA+PP} and in Algorithm \ref{ALG-KR-VI} reveals a problem that is not easily seen in the framework of Algorithm \ref{ALG-KR-VI}.
Now we may clearly see that part of the ``energy'' in Algorithm \ref{ALG-KR-VI} is put into reflecting the
shifts, and this, at least in some situations, may be wasted effort.
Although the optimal reduced order model is guaranteed
to be stable, the iterates, generally, are not. This means that some shifts $\sigma^{(k)}_i$ may be in the left--half plane, and the
$\bff^{(k+1)}$ component of the modified $\breve{\bfq}^{(k+1)}$ will tend to reflect them to the right--half plane. This in turn
forces the new reduced matrix $\breve{\bfA}^{(k+1)}_r$ to have some eigenvalues in the right--half plane, thus creating a vicious circle. Hence, in the first step (Line 1), one should correct $\bfsigma^{(k)}$, if necessary.

\begin{remark}
The facts that pole placement may be extremely ill-conditioned \cite{he:ppp}, where (depending on the distribution of the target values) \emph{plenty} may be even as small as $r=15$,  and that \textsf{IRKA} is actually doing  pole placement in disguise, open many nontrivial issues. For instance, what is a reasonable threshold for the stopping criterion? Will we be able to actually test it (and detect convergence) in finite precision computation? What are relevant condition numbers in the overall process? Do \textsf{IRKA} iterations drive the shifts to well-conditioned configurations for which the feedback vector (the reduced input) is reasonably small 
(in norm, as illustrated in the right panels of Figure \ref{FIG_CDcondC} and Figure \ref{FIG_ISS1RcondC} in Example \ref{example:condC} below)
and successful in achieving numerical convergence?  If yes, what is the underlying principle/driving mechanism?
In the next section, we touch upon some of these issues.
\end{remark}

\section{Perturbation effects and backward stability in \textsf{IRKA}}
\label{sec:backstab}

We turn our attention now to numerical considerations in the implementation of IRKA. We focus on two issues: (i) What are the perturbative effects of finite-precision  arithmetic in terms of system-theoretic quantities? (ii) What are the effects of ``numerical convergence" on the reduced model?

\subsection{Limitations of finite precision arithmetic}\label{SS=Limitations-fin-prec}

Suppose that we are  given magic shifts $\bfsigma$ so that the eigenvalues of
$\bfA_r = \bfSigma_r - \bfq \bfe^T$ are exactly $\bflambda = -\bfsigma$,
or $\bflambda \approx -\bfsigma$ up to a small tolerance $\bfeps$.  However, in
floating point computations, the vector $\bfq = (\bfW^T \bfV)^{-1}\bfW^T \bfb$ is computed up to an
error $\bfdq$, and therefore instead of $\bfA_r$, we have
$\widetilde{\bfA}_r=\bfSigma_r-({\bfq}+
\bfdq)\bfe^T$.
In practice, the source of $\bfdq$ is twofold: First, in large-scale settings,
the primitive Krylov bases $\bfV$ and $\bfW$ are usually computed by an iterative method which uses
restricted information from suitably chosen subspaces and thus generates
a truncation error; see, e.g., \cite{beattie2012inexact,ahuja2012recycling,ahuja2015recycling}. In addition, computation is polluted by omnipresent rounding errors of finite precision arithmetic. How the size of $\bfdq$ influences the other
components of the \textsf{IRKA} is a relevant information  that we investigate in this subsection.

Assume for the moment that
$\bfdq$ is the only perturbation in one step of \textsf{IRKA}. We want to understand how the eigenvalues
of $\bfAr$ and $\tbfAr$ differ as a function of $\bfdq$. In particular, we want to reveal the relevant condition numbers that play a role in this perturbation analysis.

\begin{theorem}  \label{mumutildethm}
Let $\bfAr = \bfSigma_r - \bfq\bfe^T$ and
$\tbfAr = \bfSigma_r - \tbfq\bfe^T$ be
diagonalizable, where
$\tbfq={\bfq}+\bfdq$. Let 
$\bfAr$ and $\tbfAr$ have the
 spectral
decompositions  $\bfAr = \bfX \bfM
{\bfX}^{-1}$ and $\tbfAr = \tbfX
\tbfM \tbfX^{-1}$, where
$\bfM=\mathrm{diag}(\mu_i)_{i=1}^r$,
$\tbfM=\mathrm{diag}(\tilde{\mu}_i)_{i=1}^r$ and the
eigenvector matrices
${\bfX}=\bfD_{\bfq}{\bfC}$ and
$\tbfX=\tbfD_\bfq\tbfC$ as
described in Corollary \ref{EVDArArtCor}.
Then there exists a permutation $\pi$ such that
\begin{equation}\label{eq:Wieland-Hoffman}
\sqrt{\sum_{i=1}^r \left|\frac{\mu_i -
\tilde\mu_{\pi(i)}}{\mu_i}\right|^2} \leq \left\|{\bfC}\right\|_2
\left\|({\bfC}\bfM)^{-1}\right\|_2 \kappa_2(\tbfC)
\left\|\bfdq\bfe^T\right\|_2.
\end{equation}
\end{theorem}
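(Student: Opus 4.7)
My plan is to exploit the dual spectral decomposition $\bfAr^T = \bfC\bfM\bfC^{-1}$ from Corollary~\ref{EVDArArtCor}: working on the transposes replaces the eigenvector factor $\bfD_\bfq\bfC$ of $\bfAr$ by the cleaner Cauchy factor $\bfC$, which is what ultimately allows $\bfD_\bfq$ to drop out of the final bound.

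First, note that $(\bfAr - \tbfAr)^T = -\bfe\bfdq^T$ is rank one, so $\|(\bfAr - \tbfAr)^T\|_F = \|\bfdq\bfe^T\|_2$. Conjugating this perturbation by $\bfC^{-1}$ on the left and by $\tbfC$ on the right, and using $\bfC^{-1}\bfAr^T\bfC = \bfM$ and $\tbfC^{-1}\tbfAr^T\tbfC = \tbfM$, one obtains the Sylvester-type identity
\begin{equation*}
\bfM \mathbf{Z} - \mathbf{Z} \tbfM \;=\; \bfC^{-1}\bfe\bfdq^T\tbfC, \qquad \mathbf{Z} \;:=\; \bfC^{-1}\tbfC .
\end{equation*}
Entrywise this reads $(\mu_i - \tilde{\mu}_j)\,z_{ij} = (\bfC^{-1}\bfe)_i\,(\bfdq^T\tbfC)_j$. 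The key move for introducing the relative weight $|\mu_i|^{-1}$ is then to divide through by $\mu_i$ (nonzero by diagonalizability), giving
\begin{equation*}
\frac{\mu_i - \tilde{\mu}_j}{\mu_i}\, z_{ij} \;=\; (\bfM^{-1}\bfC^{-1}\bfe)_i\,(\bfdq^T\tbfC)_j .
\end{equation*}
Squaring and summing over $(i,j)$, the right-hand side is a rank-one matrix whose squared Frobenius norm factors exactly as a product of two vector $\ell_2$ norms, so
\begin{equation*}
\sum_{i,j}\left|\frac{\mu_i - \tilde{\mu}_j}{\mu_i}\right|^2 |z_{ij}|^2 \;=\; \|\bfM^{-1}\bfC^{-1}\bfe\|_2^2\,\|\bfdq^T\tbfC\|_2^2 \;\le\; \|(\bfC\bfM)^{-1}\|_2^2\,\|\tbfC\|_2^2\,\|\bfdq\bfe^T\|_2^2,
\end{equation*}
having used $\|\bfe\|_2\|\bfdq\|_2 = \|\bfdq\bfe^T\|_2$ for the rank-one outer product.

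The final and most delicate step is the passage from this weighted double sum to an unweighted single sum indexed by a permutation. I would invoke a Hoffman--Wielandt-type matching result for Sylvester relations: polar-decomposing $\mathbf{Z} = \mathbf{U}\mathbf{P}$ and using the fact that $|\mathbf{U}|^{\circ 2}$ is doubly stochastic (so by Birkhoff--von Neumann it is a convex combination of permutation matrices), one extracts a permutation $\pi$ satisfying
\begin{equation*}
\sum_i \left|\frac{\mu_i - \tilde{\mu}_{\pi(i)}}{\mu_i}\right|^2 \;\le\; \|\mathbf{Z}^{-1}\|_2^2 \sum_{i,j}\left|\frac{\mu_i - \tilde{\mu}_j}{\mu_i}\right|^2|z_{ij}|^2 .
\end{equation*}
Since $\|\mathbf{Z}^{-1}\|_2 = \|\tbfC^{-1}\bfC\|_2 \le \|\bfC\|_2\,\|\tbfC^{-1}\|_2$, combining with the rank-one bound above and grouping $\|\tbfC\|_2\|\tbfC^{-1}\|_2 = \kappa_2(\tbfC)$ yields precisely the claimed inequality. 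The main obstacle is securing the tight extraction constant $\|\mathbf{Z}^{-1}\|_2$ rather than the looser $\kappa_2(\mathbf{Z})$, since only the former prevents an extra factor of $\|\bfC^{-1}\|_2\|\tbfC\|_2$ from contaminating the final bound.
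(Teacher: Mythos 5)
Your proposal is correct and follows essentially the same route as the paper's proof: pass to the transposed decompositions $\bfAr^T=\bfC\bfM\bfC^{-1}$, $\tbfAr^T=\tbfC\tbfM\tbfC^{-1}$, form the Sylvester relation for $\mathbf{Z}=\bfC^{-1}\tbfC$ with the rank-one right-hand side, weight by $\mu_i^{-1}$, and extract the permutation via the Elsner--Friedland doubly-stochastic/Birkhoff argument with the constant $\|\mathbf{Z}^{-1}\|_2^2=\sigma_{\min}(\mathbf{Z})^{-2}$. Your explicit factorization of the rank-one right-hand side makes the final constant $\|\bfC\|_2\|(\bfC\bfM)^{-1}\|_2\kappa_2(\tbfC)\|\bfdq\bfe^T\|_2$ slightly more transparent than the paper's terse closing step, but the argument is the same.
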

\begin{proof}
Note that we can equivalently compare the spectra of
$\bfAr^T$ and $\tbfAr^T$. From \eqref{leftRitzvector}, we know that the spectral decomposition of $\bfAr^T$ is given by
\begin{equation} \label{EVDArt-copy}
\bfAr^T =  \bfSigma_r - \bfe\bfq^T = 
\bfD_{{\bfq}}^{-1} \bfAr \bfD_{{\bfq}} = 
\bfC \bfM \bfC^{-1}.
\end{equation}
Similarly for $\tbfAr^T$, we obtain
\begin{equation}\label{EVDtildeArt}
\tbfAr^T = 
\bfSigma_r - \bfe\tbfq^T = 
\tbfD_{\bfq}^{-1}
\tbfAr \tbfD_{\bfq}= \tbfC \tbfM \tbfC^{-1}.
\end{equation}
Next we employ the perturbation results of Elsner and
Friedland \cite{elsner1995svd}, and Eisenstat and Ipsen \cite{eisenstat1998tap}, while taking into
account the special structure of both matrices. Write $\tbfAr^T =
\bfAr^T + \bfdelta \bfAr^T$, where
$\bfdelta\bfAr^T=-\bfe\bfdelta\bfq^T$. Using the
spectral decompositions \eqref{EVDArt} and \eqref{EVDtildeArt}, the matrix
$\bfAr^{-T}\tbfAr^T -
\bfI=\bfAr^{-T}\bfdelta\bfAr^T$ can be
transformed into
\begin{equation}\label{eq:Y}
\bfM^{-1}
({\bfC}^{-1}\tbfC)\tilde{\bfM} -
({\bfC}^{-1}\tbfC) =
{\bfC}^{-1}\bfAr^{-T}\bfdelta \bfAr^T\tbfC.
\end{equation}
Set ${\bfY}={\bfC}^{-1}\tbfC$ and take the
absolute value of ${y}_{ij}$, an arbitrary entry of $\bfY$ at the $(i,j)$th position, to obtain
$$
|{y}_{ij}| \left| \frac{\tilde{\mu}_j}{\mu_i} - 1\right| =
\left|
({\bfC}^{-1}\bfAr^{-T}\bfdelta \bfAr^T\tbfC)_{ij}\right|\;\;\mbox{for}~~1\leq
i,j \leq r.
$$
Hence $${\displaystyle \sum_{i=1}^r\sum_{j=1}^r|{y}_{ij}|^2
\left| \frac{\tilde{\mu}_j}{\mu_i} - 1\right|^2 = \|
{\bfC}^{-1}\bfAr^{-T}\bfdelta \bfAr^T\tbfC
\|_F^2, }
$$ where the Hadamard product matrix ${\bfY}\circ
\overline{{\bfY}}=(|{y}_{ij}|^2)_{i,j=1}^r$ is
entry--wise bounded by 
$$
\sigma_{\min}(\bfY)^2\; {\bfS}_{ij} \leq ({\bfY}\circ
\overline{{\bfY}})_{ij} \leq \sigma_{\max}({\bfY})^2\;
{\bfS}_{ij},
$$
where $\bfS$ is a doubly--stochastic matrix; see  \cite{elsner1995svd}.
Hence
\begin{equation}\label{eq:Elsner+Friedland}
\sum_{i=1}^r\sum_{j=1}^r {\bfS}_{ij} \left|
\frac{\tilde{\mu}_j}{\mu_i} - 1\right|^2 \leq
\|{\bfY}^{-1}\|_2^2\|
{\bfC}^{-1}\bfAr^{-T}\bfdelta \bfAr^T\tbfC
\|_F^2 \leq \kappa_2({\bfC})^2 \kappa_2(\tbfC)^2
\|\bfAr^{-T}\bfdelta \bfAr^T\|_F^2.
\end{equation}
The expression on the left--hand side of (\ref{eq:Elsner+Friedland})
can be considered as a function defined on the convex polyhedral set
of doubly--stochastic matrices, whose extreme points are the
permutation matrices. Thus, for some permutation $\pi$, we obtain
$$
\sum_{i=1}^r \left|
\frac{\tilde{\mu}_{\pi(i)}-\mu_i}{\mu_i}\right|^2 \leq
\|{\bfY}^{-1}\|_2^2\|
{\bfC}^{-1}\bfAr^{-T}\bfdelta \bfAr^T\tbfC
\|_F^2.
$$
Then, using \eqref{EVDArt}, the spectral decomposition of $\bfAr^{-T}$, and the
definition of $\bfdelta \bfAr$ complete the proof. 
\hfill$\Box$
\end{proof}
\begin{remark}
One can write (\ref{eq:Y}) as $ {\mathbf{Y}}\tbfM -
\bfM{\mathbf{Y}} =
\mathbf{\bfC}^{-1}\bfdA_r^T{\tbfC} $ and conclude
that there exists a permutation $p$ such that (see
\cite{elsner1995svd})
$$
\sqrt{\sum_{i=1}^n |\tilde{\mu}_{p(i)} - \mu_i|^2} \leq
\kappa_2(\mathbf{C})\kappa_2(\tilde{\mathbf{C}})\|\bfdq\bfe^T\|_2.
$$
\end{remark}

\begin{remark}
The right--hand side in relation \eqref{eq:Wieland-Hoffman} can also be
bounded by
$$\sqrt{r}\kappa_2({C})\kappa_2(\tbfC)\frac{\|\bfdq\|_2}{\min_{i}|\mu_i|}.$$
\end{remark}

From the numerical point of view, Theorem \ref{mumutildethm} cannot be good news --
Cauchy matrices can be ill--conditioned. A few random trials will quickly produce
a $10\times 10$ Cauchy matrix with condition number greater than $10^{10}$. 
The most notorious example is the Hilbert matrix, which at the dimension $100$ has a condition number larger than $10^{150}$. 
No function of the matrix
that is influenced by that condition number of the eigenvectors can
be satisfactorily computed in $32$ bit machine arithmetic. The
$64$ bit double precision allows only slightly larger dimensions
before the condition number takes over the machine double precision.

On the other hand, we note that our goal is not to place the shifts
at any predefined locations in the complex plane. Instead, we are
willing to let them go wherever they want, under the condition
that they remain closed under conjugation and stationary at those
positions. It should also be noted that the distribution of the
shifts obviously plays a role in this considerations. The following example
will illustrate this; especially the impact of the optimal $\mathcal{H}_2$ interpolation points.

\begin{example}
\label{example:condC}
As in Example \ref{EX-CDP-loop},  we first take the CD player model \cite{ChaV2005,DraSB92} of order $n=120$ and apply \textsf{IRKA} as in Algorithm \ref{ALG-HOKIE} for $r=2$, $r=16$, and $r=26$. In each case, \textsf{IRKA}  is initialized by randomly assigned shifts. The condition numbers of $\bfC$ for each case, recorded throughout the iterations, are shown on the left panel of Figure \ref{FIG_CDcondC}. \textsf{IRKA} drastically reduces the condition number of $\bfC$  throughout the iteration, more than $15$ order of magnitudes for $r=16$
and $r=26$ cases. Therefore,  \textsf{IRKA} keeps assigning shifts in such a way that $\bfC$ becomes better and better conditioned; thus in affect limiting the perturbation effects predicted by Theorem \ref{mumutildethm}. Moreover, we can observe that the reduced input vectors $\bfq^{(k)}$, which act also as feedback vectors that steer the shifts, diminish in norm over the iterations; see the right panel of Figure \ref{FIG_CDcondC}.

\begin{figure}[hh]
\begin{center}
\includegraphics[scale=0.38]{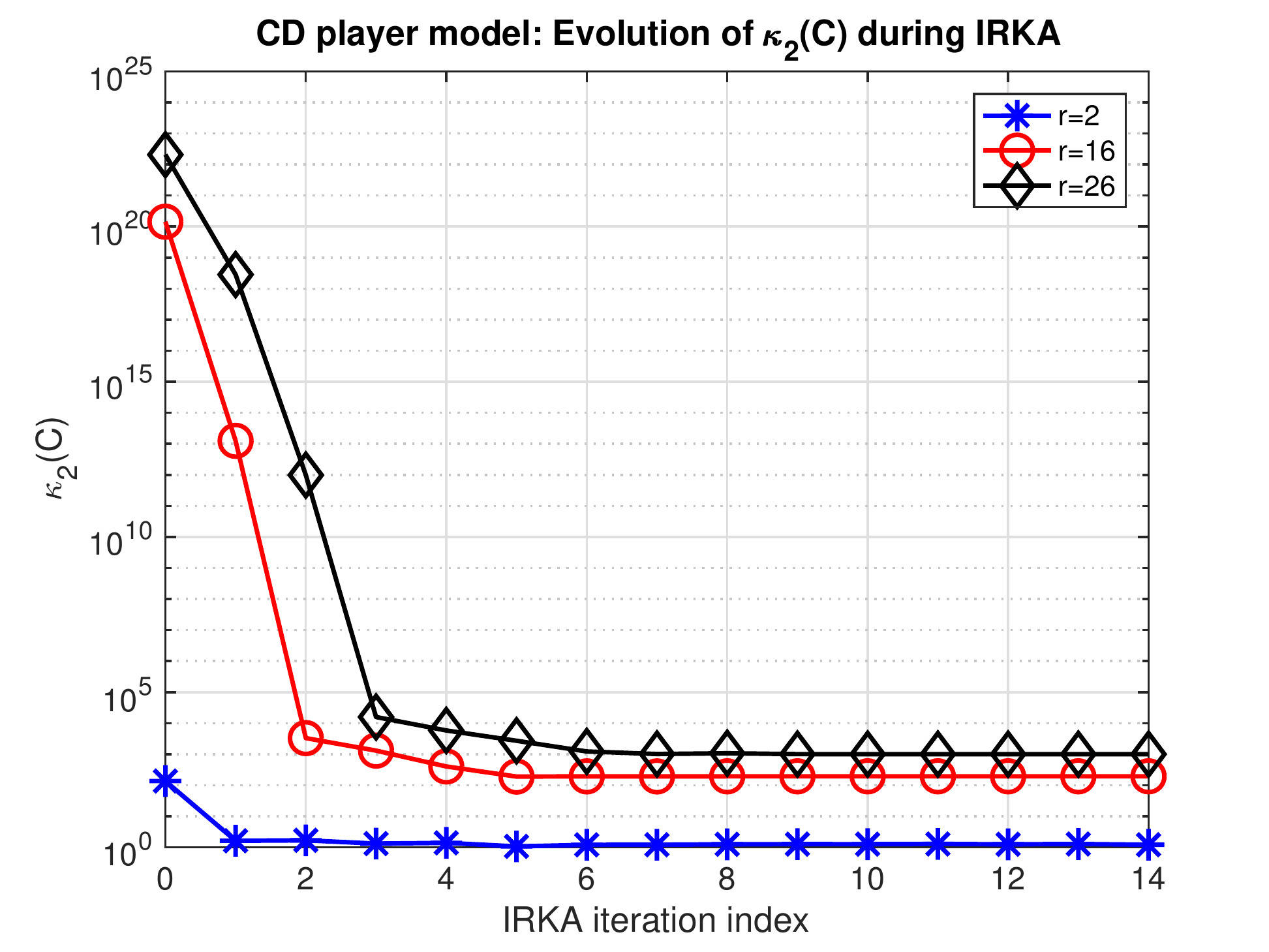}
\includegraphics[scale=0.38]{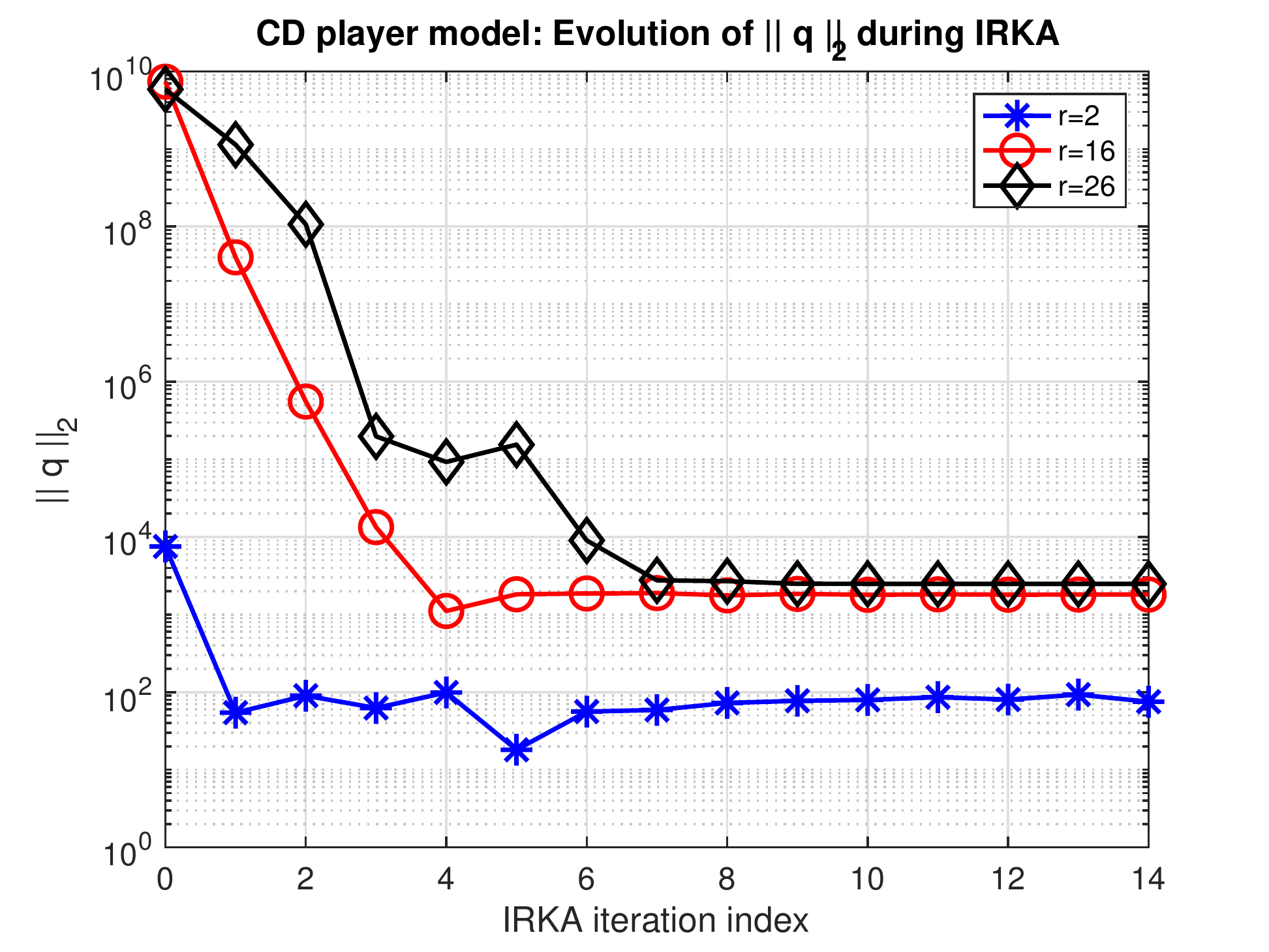}
\end{center}
\caption{$\kappa_2(\bfC)$ and $\|\bfq^{(k)}\|_2$ during \textsf{IRKA} for the CD Player example}
\label{FIG_CDcondC}
\end{figure}

We have observed the same effect in all the examples we have tried. For brevity, we include only one more such result using the International Space Station 1R Module \cite{morGugAB01,antoulas2001asurvey} of order $n=270$. As for the CD Player model, we reduce this model with \textsf{IRKA} using random initial shifts and this time chose reduced orders of $r=10$, $r=20$, and $r=30$. The results depicted in Figure  
\ref{FIG_ISS1RcondC} reveal the same behavior: The condition number $\kappa_2(\bfC)$ is reduced significantly during 
\textsf{IRKA} as shifts converge to the optimal shifts; the same holds for the reduced input norms $\|\bfq^{(k)}\|_2$. These observation raises intriguing theoretical questions about the distribution of the $\mathcal{H}_2$-optimal shifts, as their impact mimics that of Chebyshev points (as opposed to linearly spaced ones) in polynomial interpolation. These issues will not be studied here and are left to future papers.
\begin{figure}[hh]
\begin{center}
\includegraphics[scale=0.38]{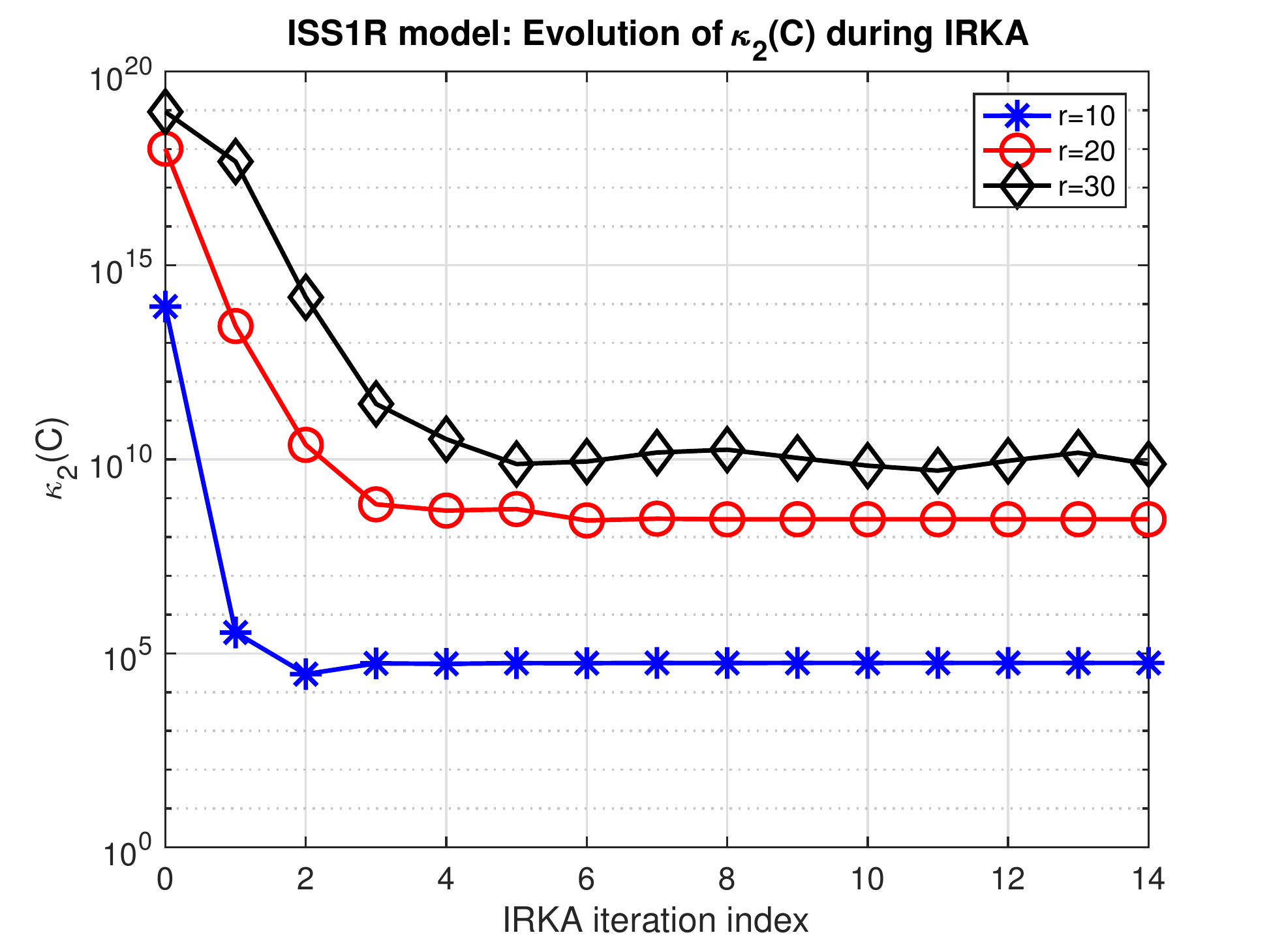}
\includegraphics[scale=0.38]{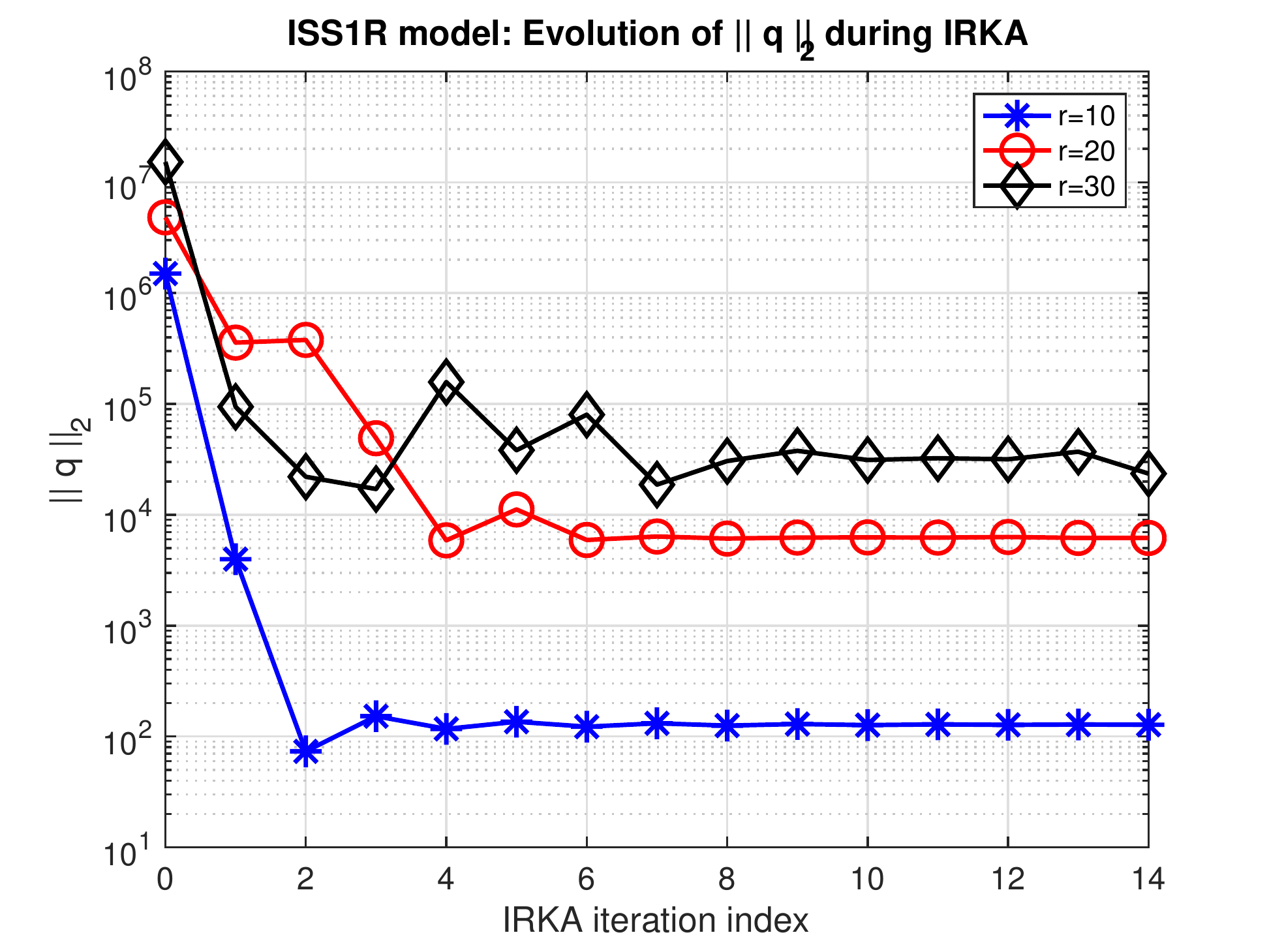}
\end{center}
\caption{$\kappa_2(\bfC)$ and $\|\bfq^{(k)}\|_2$ during \textsf{IRKA} for the IRR 1R example}
\label{FIG_ISS1RcondC}
\end{figure}

\end{example}

\subsection{Stopping criterion and backward stability}
\label{sec:backstab-stop-crit}

Analytically, $\mathcal{H}_2$ optimality is satisfied when $\bfsigma = -\bfmu$. However, in practice Algorithm \ref{ALG-HOKIE} will be terminated once a numerical convergence threshold is met. In this section, we will investigate the impact of numerical convergence on the resulting reduced model. The pole-placement connection we established in Section \ref{sec:poleplacement} will play a fundamental role in answering this question.

Suppose that in Algorithm \ref{ALG-HOKIE} a numerical stopping (convergence) criterion has
been satisfied, i.e., the eigenvalues $\mu_1,\ldots, \mu_k$
of $\bfAr$ \emph{are close to the reflected shifts}.  Both
the shifts $\bfsigma$ and the computed eigenvalues $\bfmu$ are
unordered $r$--tuples of complex numbers, and we measure their distance using optimal matching, see \S \ref{SS=Matching+example}.
Hence, we define the indexing of $\bfmu=(\mu_1,\ldots,
\mu_r)$ so that
$$
\|\bfmu - (-\bfsigma)\|_\infty =
\min_{\pi\in\mathbb{S}_r}\max_{k=1:r}|\mu_{\pi(k)} -
(-\sigma_k)| .
$$
Recall that the shifts $\bfsigma = \{\sigma_1,\ldots,\sigma_r\}$ are closed under
conjugation, with strictly positive real parts, and all assumed to
be simple. The eigenvalues $\bfmu = \{\mu_1,\ldots, \mu_r\}$ are assumed
also simple, and they are obviously closed under
conjugation. With this setup, we
write
\begin{equation}\label{eq:mu_k}
\mu_k =  - \sigma_k + \varepsilon_k,\; k=1,\ldots, r,
\end{equation}
where we note that the $\varepsilon_k$'s are closed under
conjugation as well. Our goal is to relate the $\varepsilon_k$'s and
the quality of the computed reduced order model identified by the triplet $(\bfAr,\bfbr,\bfcr)$ in the sense of backward error. In
particular, we need a yardstick to determine when an $\varepsilon_k$ is
small.

There is a caveat here: for given shifts $\bfsigma$, the vector $\bfmu$ consists of the computed
eigenvalues, thus possibly highly ill--conditioned and computed with large errors.
Our analysis here considers the computed eigenvalues as exact but for a slightly changed input data,\footnote{This is the classical backward error interpretation.} and
we focus on the stopping criterion and how to justify it through a backward
stability statement. This means that we want to interpret the computed reduced order model as an exact reduced order model for an LTI close to the original one (\ref{origLTIsys}).

The representation of the reduced order model in the primitive basis presented in Section \ref{sec:prim_basis} 
yields an elegant structure and provides theoretical insights. On
the other hand, having numerical computation in mind, the same
structure gives reasons to exercise caution. This caution is
particularly justified because, as we showed in Section \ref{sec:poleplacement},
the ultimate step of the iteration is an eigenvalue assignment problem:
find the shifts $\bfsigma$ such that the eigenvalues of
$\bfSigma_r - {\bfq}(\bfsigma)\bfe^T$ are the
reflected shifts.

Using \eqref{eq:MXu-q}, \eqref{eq:mu_k}, and Lemma \ref{RatKryCanonForm}, we know that the vector
$\bfq = \begin{bmatrix} q_1 & q_2 & \ldots & q_r \end{bmatrix}^T$  satisfies
\begin{equation}\label{eq:q_i}
{q}_i = (\sigma_i - \mu_i) \prod_{\stackrel{k=1}{k\neq
i}}^{r} \frac{\sigma_i-\mu_k}{\sigma_i-\sigma_k} =
(2\sigma_i-\varepsilon_i)\prod_{\stackrel{k=1}{k\neq i}}^{r}
\frac{\sigma_i+\sigma_k-\varepsilon_k}{\sigma_i-\sigma_k},\;\;i=1,\ldots,
r.
\end{equation}
We now do the following \emph{Gedankenexperiment}.  Consider the true
reflections of the shifts, $\mu_i^\bullet = - \sigma_i$. Since the pair $(\bfSigma_r,
\bfe)$ is controllable\footnote{Let $\bfA \in \Cplx^{n\times n}$ and 
$\bfb\in \Cplx^n$. Then, the pair $(\bfA,\bfb)$ is called controllable if $\mathrm{rank}\begin{bmatrix}\bfb & \bfA \bfb & \cdots & \bfA^{n-1}\bfb \end{bmatrix}=n.$}, there exists ${\bfq}^\bullet$ such
that the eigenvalues of $\bfSigma_r - {\bfq}^\bullet \bfe^T$
are precisely $\mu_1^\bullet, \ldots, \mu_r^\bullet$. In
fact, by the formula (\ref{eq::f^k+1}), the feedback ${\bfq}^\bullet$ is explicitly given  as
\begin{equation}\label{eq:q^bullet_i}
{q}_i^\bullet = 2\sigma_i\prod_{\stackrel{k=1}{k\neq i}}^{r}
\frac{\sigma_i+\sigma_k}{\sigma_i - \sigma_k}, \;\;i=1,\ldots, r.
\end{equation}
Comparing (\ref{eq:q_i}) and (\ref{eq:q^bullet_i}), we see that our
computed vector ${\bfq}$ satisfies
\begin{equation}
{q}_i = {q}_i^\bullet \prod_{k=1}^r (1 -
\frac{\varepsilon_k}{\sigma_i+\sigma_k})\equiv {q}_i^\bullet
(1-\eta_i), \;\;i=1,\ldots, r.
\end{equation}
Since all the shifts are in the right--half--plane,
$|\sigma_i+\sigma_k|$ is bounded from below by $2\min_j
\mathrm{Re}(\sigma_j)>0$. We find it desirable to have our actually
computed ${\bfq}$ close to ${\bfq}^\bullet$, because
${\bfq}^\bullet$ does exactly what we would like the computed
${\bfq}$ to achieve in the limit. This indicates one possible
stopping criterion for the iterations --  the maximal allowed
distance between the new and the old shifts should guarantee small
$|\varepsilon_k/(\sigma_i+\sigma_k)|$ for all $i, k$.

If we define $\bfAr^\bullet \equiv \bfSigma_r -
{\bfq}^\bullet \bfe^T$, then its eigenvalues  are the
reflections of the shifts. Comparing this outcome with the actually
computed $\bfAr=\bfSigma_r - {\bfq}\bfe^T$, we obtain
the following result.

\begin{proposition}\label{PROP:A_r^bullet}
Let   Algorithm \ref{ALG-HOKIE} be stopped with computed $\bfAr =
\bfSigma_r - {\bfq}\bfe^T$, and let the eigenvalues of
$\bfAr$ be $\mu_k =  - \sigma_k + \varepsilon_k$,
$k=1,\ldots, r$. Let
\begin{equation}
\bfvareps^\bullet \equiv  \max_{1\leq i\leq r} \left| \prod_{k=1}^r
\left( 1 - \frac{\varepsilon_k}{\sigma_i + \sigma_k}\right) - 1
\right| < 1,\;\; \bfvareps \equiv  \max_{1\leq i\leq r} \left|
\prod_{k=1}^r \left( 1 + \frac{\varepsilon_k}{\sigma_i -
\mu_k}\right) - 1 \right| < 1.
\end{equation}
Then there exists $\bfAr^\bullet =
\bfAr+\bfdelta\bfAr$ with eigenvalues
$-\sigma_1,\ldots,-\sigma_r$, and $\bfq^\bullet =
{\bfq}-\bfdq$ such that $\bfAr^\bullet =
\bfSigma_r - {\bfq}^\bullet \bfe^T$;
$\|\bfdq\|_2 \leq \bfvareps \|{\bfq}\|_2$,
$\|\bfdq\|_2 \leq \bfvareps^\bullet
\|{q}^\bullet\|_2$; and
\begin{equation}
\|\bfdelta\bfAr\|_2 \leq  2 \bfvareps^\bullet
\|\bfAr^\bullet\|_2,\;\; \|\bfdelta\bfAr\|_2 \leq
\bfvareps (\|\bfAr\|_2 + (1+\max_{k}\left|
\frac{\varepsilon_k}{\mu_k}\right|)\|\bfAr\|_2).
\end{equation}
\end{proposition}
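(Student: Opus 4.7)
The plan is to construct $\bfq^\bullet$ explicitly from the pole-placement formula (\ref{eq:q^bullet_i}) and then read off all required bounds from a single scalar identity. Concretely, define $\bfq^\bullet=(q_1^\bullet,\ldots,q_r^\bullet)^T$ by (\ref{eq:q^bullet_i}); the uniqueness of the eigenvalue-assignment feedback (via Lemma~\ref{RatKryCanonForm} or the formula of \cite{mehrmann1996app,mehrmann1998cps}) guarantees that $\bfAr^\bullet\equiv\bfSigma_r-\bfq^\bullet\bfe^T$ has spectrum $\{-\sigma_1,\ldots,-\sigma_r\}$. Put $\bfdq=\bfq-\bfq^\bullet$; then $\bfq^\bullet=\bfq-\bfdq$ and
$$\bfdelta\bfAr=\bfAr^\bullet-\bfAr=(\bfq-\bfq^\bullet)\bfe^T=\bfdq\,\bfe^T,$$
so the problem reduces to estimating $\|\bfdq\|_2$ in two different ways.

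Next, I would compare (\ref{eq:q_i}) with (\ref{eq:q^bullet_i}) entrywise. The term $(2\sigma_i-\varepsilon_i)$ in (\ref{eq:q_i}) can be absorbed into the product as the $k=i$ factor, giving the clean relation
$$q_i=q_i^\bullet\prod_{k=1}^r\Bigl(1-\tfrac{\varepsilon_k}{\sigma_i+\sigma_k}\Bigr)=q_i^\bullet(1-\eta_i),\qquad \eta_i:=1-\prod_{k=1}^r\Bigl(1-\tfrac{\varepsilon_k}{\sigma_i+\sigma_k}\Bigr).$$
The key algebraic identity is $\sigma_i-\mu_k=\sigma_i+\sigma_k-\varepsilon_k$ (by (\ref{eq:mu_k})), whence
$$1+\tfrac{\varepsilon_k}{\sigma_i-\mu_k}=\tfrac{\sigma_i+\sigma_k}{\sigma_i+\sigma_k-\varepsilon_k}=\Bigl(1-\tfrac{\varepsilon_k}{\sigma_i+\sigma_k}\Bigr)^{-1}.$$
Taking products this immediately gives $\prod_k(1+\varepsilon_k/(\sigma_i-\mu_k))=1/(1-\eta_i)$, so that
$$\bfvareps^\bullet=\max_i|\eta_i|,\qquad \bfvareps=\max_i\Bigl|\tfrac{\eta_i}{1-\eta_i}\Bigr|.$$

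The two bounds on $\bfdq$ now fall out of $q_i-q_i^\bullet=-q_i^\bullet\eta_i=-q_i\,\eta_i/(1-\eta_i)$: taking Euclidean norms yields $\|\bfdq\|_2\le\bfvareps^\bullet\|\bfq^\bullet\|_2$ and $\|\bfdq\|_2\le\bfvareps\|\bfq\|_2$ respectively (using that $|\eta_i|\le\bfvareps^\bullet<1$ and $|\eta_i/(1-\eta_i)|\le\bfvareps<1$ pointwise).

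For the bounds on $\bfdelta\bfAr=\bfdq\bfe^T$, I would write this rank-one matrix in two equivalent diagonal-scaled forms:
$$\bfdelta\bfAr=-\bfD_{\bfeta}\,\bfq^\bullet\bfe^T=-\bfD_{\bfeta}(\bfSigma_r-\bfAr^\bullet)=\bfD_{\widetilde\bfeta}\,\bfq\bfe^T=\bfD_{\widetilde\bfeta}(\bfSigma_r-\bfAr),$$
where $\widetilde\eta_i=-\eta_i/(1-\eta_i)$, and I used $\bfq^\bullet\bfe^T=\bfSigma_r-\bfAr^\bullet$ and $\bfq\bfe^T=\bfSigma_r-\bfAr$. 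Since $\|\bfD_\bfeta\|_2=\bfvareps^\bullet$ and $\|\bfD_{\widetilde\bfeta}\|_2=\bfvareps$, the triangle inequality gives
$$\|\bfdelta\bfAr\|_2\le\bfvareps^\bullet(\|\bfSigma_r\|_2+\|\bfAr^\bullet\|_2),\qquad \|\bfdelta\bfAr\|_2\le\bfvareps(\|\bfSigma_r\|_2+\|\bfAr\|_2).$$
The remaining step is to eliminate $\|\bfSigma_r\|_2=\max_i|\sigma_i|$. For the first estimate, the eigenvalues of $\bfAr^\bullet$ are exactly $-\sigma_i$, so the spectral radius gives $\|\bfSigma_r\|_2\le\|\bfAr^\bullet\|_2$ and we obtain the factor $2\|\bfAr^\bullet\|_2$. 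For the second, from $\sigma_i=-\mu_i+\varepsilon_i$ we get $|\sigma_i|\le(1+|\varepsilon_i/\mu_i|)|\mu_i|\le(1+\max_k|\varepsilon_k/\mu_k|)\|\bfAr\|_2$, yielding the stated bound.

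The main obstacle is the algebraic simplification in step two — recognizing that the $(2\sigma_i-\varepsilon_i)$ prefactor in (\ref{eq:q_i}) is exactly the missing $k=i$ factor in the product, and that the two apparently different $\varepsilon$-quantities are reciprocally related. Once that identity is in place, everything else is a routine application of submultiplicativity and the spectral-radius inequality.
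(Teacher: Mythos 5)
Your proof is correct and follows essentially the same route as the paper: define $\bfq^\bullet$ by the explicit pole-placement formula \eqref{eq:q^bullet_i}, use the multiplicative relation $q_i=q_i^\bullet(1-\eta_i)$ set up just before the proposition, and control $\|\bfSigma_r\|_2$ by the spectral radius of $\bfAr^\bullet$ (resp.\ of $\bfAr$ via $\sigma_i=-\mu_i+\varepsilon_i$). If anything, you are more complete than the paper's own proof, which only writes out the $\bfvareps^\bullet$-bounds explicitly and leaves the reciprocal identity $\prod_{k}\bigl(1+\varepsilon_k/(\sigma_i-\mu_k)\bigr)=(1-\eta_i)^{-1}$ and the resulting $\bfvareps$-bounds implicit.
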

\begin{proof}
Define ${\bfq}^\bullet$ using \eqref{eq:q^bullet_i} and write
${\bfq}={\bfq}^\bullet +\bfdq$.
Write the actually computed reduced matrix $\bfA_r = \bfSigma_r -
{\bfq} \bfe^T$ as
\begin{equation}\label{eq:A^bullet}
\bfAr = \bfSigma_r - {\bfq}^\bullet \bfe^T - \bfdq \bfe^T\;\;\mbox{or}\;\; \bfAr^\bullet =
\bfSigma_r - {\bfq}^\bullet
\bfe^T,\;\;\mbox{where}\;\;\bfAr^\bullet = \bfAr
+ \bfdq \bfe^T .
\end{equation}
 Note that
$\|\bfSigma_r\|_2 = \mathrm{spr}(\bfAr^\bullet) \leq
\|\bfAr^\bullet\|_2$. Further, using $\bfq^\bullet \bfe^T =
\bfSigma_r - \bfAr^\bullet$ and taking the norm we get
$$
\sqrt{r}\|{\bfq}^\bullet\|_2 \leq
\mathrm{spr}(\bfAr^\bullet) + \|\bfAr^\bullet\|_2 \leq
2 \|\bfAr^\bullet\|_2,
$$
and thus the norm of $\bfdelta\bfAr=\bfdq\bfe^T$ can be
estimated as $$\|\bfdelta\bfAr\|_2 = \sqrt{r} 
\|\bfdq\|_2 \leq \sqrt{r} \bfvareps^\bullet \|{\bfq}^\bullet
\|_2 \leq 2 \bfvareps^\bullet \|\bfAr^\bullet\|_2,$$ 
completing the proof. \hfill $\Box$
\end{proof}
\begin{remark}
We conclude that in the vicinity of our computed data (reduced quantities)
$\bfAr$ and $\bfq$, there exist $\bfAr^\bullet$ and
$\bfq^\bullet$ that satisfy the stopping criterion exactly.
Both $\|\bfAr-\bfAr^\bullet\|_2$ and
$\|\bfq-\bfq^\bullet\|_2$ are estimated by the size of
$\|\bfdq\|_2$. But
there is a subtlety here: we cannot use $\|\bfdq\|_2$ as
the stopping criterion. In other words, if we compute $\bfq$
and conclude that $\|\bfq-\bfq^\bullet\|_2$ is small, it
does not mean that the $\mu_k$'s are close to the reflections of
the $\sigma_k$'s. There is difference between continuity and forward stability.
\end{remark}

Our next goal is to interpret $\bfdAr$ and
$\bfdq$ as the results of backward perturbations in the
initial data $\bfA$, $\bfb$.

\begin{theorem}\label{TM:dA_for_A^bullet}
Under the assumptions of Proposition \ref{PROP:A_r^bullet}, there
exist backward perturbations $\bfdA$ and
$\bfdb$ such that the reduced order system 
$$
\begin{array}{l|l} {\displaystyle \mathbf{A}_r^\bullet=\bfSigma_r-\mathbf{q}^\bullet
\bfe^T =
(\mathbf{W}^T\mathbf{V})^{-1}\mathbf{W}^T(\mathbf{A}+\bfdA)\mathbf{V}}
& {\displaystyle \mathbf{b}_r^\bullet \equiv
\mathbf{q}^\bullet=(\mathbf{W}^T\mathbf{V})^{-1}\mathbf{W}^T(\mathbf{b}-\delta
\mathbf{b})} \\\hline {\displaystyle \mathbf{c}_r^\bullet =
\mathbf{c}_r} & \end{array}
$$
corresponds to exact model reduction of  
the perturbed full-order model described by the triplet of matrices
$(\mathbf{A}+\bfdA,\mathbf{b}-\bfdb,\mathbf{c})$ and
has its poles at the reflected shifts. Let 
$G_r^\bullet(s)=\mathbf{c}_r^T
(s\bfI_r-\mathbf{A}_r^\bullet)^{-1}\mathbf{b}_r^\bullet$ and
$G^\bullet (s) = \mathbf{c}^T (s\bfI_n -
(\mathbf{A}+\bfdA))^{-1}(\mathbf{b}-\bfdb)$
denote the transfer functions of this reduced order system, and the
backward perturbed original system, respectively. Then,
$G_r^\bullet(\sigma_i)=G^\bullet(\sigma_i)$, $i=1,\ldots,r$. The backward perturbations satisfy
\begin{equation}
{\displaystyle \|\bfdb\|_2 \leq
\frac{\kappa_2(\mathbf{V})}{\cos\angle(\mathcal{V,W})} \bfvareps
\|\mathbf{b}\|_2}
\end{equation}
and 
\begin{equation}
{\displaystyle \|\bfdelta \mathbf{A}\|_2 \leq \frac{\kappa_2(\mathbf{V})^2}{\cos\angle(\mathcal{V,W})}
\frac{2\bfvareps^\bullet}{1-2\bfvareps^\bullet}\|\mathbf{A}\|_2},~
\mbox{provided~that}~\bfvareps^\bullet < 1/2
\end{equation}
where $\mathcal{V} = \textsf{Range}(\bfV)$
and $\mathcal{W} = \textsf{Range}(\bfW)$.
\end{theorem}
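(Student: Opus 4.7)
The plan is to exhibit explicit backward perturbations that lift the rank-one update $\bfA_r^\bullet-\bfA_r = \bfdq\bfe^T$ from Proposition~\ref{PROP:A_r^bullet} back to the full order. Let $\bfV^{+}$ denote the Moore--Penrose pseudoinverse of $\bfV$ (which has full column rank), so that $\bfV^{+}\bfV=\bfI_r$. Set
\[
\bfdb = \bfV\bfdq, \qquad \bfdA = \bfV\,\bfdq\,\bfe^T\bfV^{+}.
\]
Applying $(\bfW^T\bfV)^{-1}\bfW^T$ to these definitions immediately yields $(\bfW^T\bfV)^{-1}\bfW^T\bfdb = \bfdq$ and $(\bfW^T\bfV)^{-1}\bfW^T\bfdA\,\bfV = \bfdq\bfe^T$, so that the triple obtained by Petrov--Galerkin projection from $(\bfA+\bfdA,\bfb-\bfdb,\bfc)$ via $\bfV,\bfW$ is exactly $(\bfA_r^\bullet,\bfq^\bullet,\bfc_r)$.

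The interpolation identity then follows from a single columnwise observation. Since $\bfv_i = (\sigma_i\bfI-\bfA)^{-1}\bfb$ is the $i$th column of $\bfV$, the vector $\bfV^{+}\bfv_i$ equals the $i$th standard basis vector of $\Cplx^r$, hence $\bfe^T\bfV^{+}\bfv_i = 1$. Therefore
\[
\bfdA\,\bfv_i = \bfV\bfdq\,(\bfe^T\bfV^{+}\bfv_i) = \bfV\bfdq = \bfdb,
\]
which gives $(\sigma_i\bfI-(\bfA+\bfdA))\bfv_i = (\sigma_i\bfI-\bfA)\bfv_i - \bfdA\,\bfv_i = \bfb-\bfdb$. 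Thus the existing rational Krylov direction $\bfv_i\in\textsf{Range}(\bfV)$ also serves the perturbed system at $\sigma_i$, and the standard Petrov--Galerkin argument (premultiply by $(\bfW^T\bfV)^{-1}\bfW^T$) transfers this to the reduced level, delivering $G^\bullet(\sigma_i) = \bfc^T\bfv_i = \bfc_r^T(\sigma_i\bfI_r-\bfA_r^\bullet)^{-1}\bfb_r^\bullet = G_r^\bullet(\sigma_i)$.

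For the norm estimates, let $\Pi=\bfV(\bfW^T\bfV)^{-1}\bfW^T$ be the oblique projector onto $\mathcal{V}$ along $\mathcal{W}^{\perp}$, for which $\|\Pi\|_2 = 1/\cos\angle(\mathcal{V},\mathcal{W})$, and exploit the factorization $(\bfW^T\bfV)^{-1}\bfW^T = \bfV^{+}\Pi$ to obtain $\|(\bfW^T\bfV)^{-1}\bfW^T\|_2\leq \|\bfV^{+}\|_2/\cos\angle(\mathcal{V},\mathcal{W})$. Combining this with Proposition~\ref{PROP:A_r^bullet}'s $\|\bfdq\|_2\leq\bfvareps\|\bfq\|_2$ and $\|\bfq\|_2 \leq \|(\bfW^T\bfV)^{-1}\bfW^T\|_2\|\bfb\|_2$ converts $\|\bfdb\|_2 \leq \|\bfV\|_2\|\bfdq\|_2$ into the claimed bound via $\kappa_2(\bfV)=\|\bfV\|_2\|\bfV^{+}\|_2$. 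For $\bfdA$, submultiplicativity yields $\|\bfdA\|_2 \leq \kappa_2(\bfV)\|\bfdq\bfe^T\|_2 = \kappa_2(\bfV)\|\bfA_r^\bullet-\bfA_r\|_2$. Proposition~\ref{PROP:A_r^bullet} gives $\|\bfA_r^\bullet-\bfA_r\|_2\leq 2\bfvareps^\bullet\|\bfA_r^\bullet\|_2$, and the triangle-inequality bootstrap $\|\bfA_r^\bullet\|_2 \leq \|\bfA_r\|_2+\|\bfA_r^\bullet-\bfA_r\|_2$ (valid when $\bfvareps^\bullet<1/2$) promotes this to $\|\bfA_r^\bullet-\bfA_r\|_2 \leq \tfrac{2\bfvareps^\bullet}{1-2\bfvareps^\bullet}\|\bfA_r\|_2$; the final link $\|\bfA_r\|_2 \leq (\kappa_2(\bfV)/\cos\angle(\mathcal{V},\mathcal{W}))\|\bfA\|_2$ closes the chain.

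The main subtlety I anticipate is the correct choice of template for $\bfdA$: it must simultaneously satisfy the projection identity $(\bfW^T\bfV)^{-1}\bfW^T\bfdA\,\bfV=\bfdq\bfe^T$ \emph{and} the columnwise identity $\bfdA\,\bfv_i = \bfdb$, which is what transports the existing rational Krylov directions to the perturbed system without modifying $\bfV$ or $\bfW$. Moreover, the left-inverse factor in $\bfdA = \bfV\bfdq\bfe^T\bfV^{+}$ must be the Moore--Penrose $\bfV^{+}$ rather than the oblique $(\bfW^T\bfV)^{-1}\bfW^T$: the former delivers the sharper bound $\kappa_2(\bfV)^2/\cos\angle(\mathcal{V},\mathcal{W})$, whereas the latter would introduce an avoidable extra $1/\cos\angle(\mathcal{V},\mathcal{W})$ factor.
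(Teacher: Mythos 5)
Your proposal is correct and follows essentially the same route as the paper: a rank-one lift $\bfdA=\bfdb\,\bff^T$ with $\bff^T=\bfe^T\bfV^{\dagger}$, verification of the shifted Sylvester identity $\bfV\bfSigma_r-(\bfA+\bfdA)\bfV=(\bfb-\bfdb)\bfe^T$ to transport the interpolation conditions, and the same chaining of norm bounds through $\kappa_2(\bfV)$ and $1/\cos\angle(\mathcal{V},\mathcal{W})$. The only (immaterial) difference is your choice $\bfdb=\bfV\bfdq\in\mathcal{V}$ versus the paper's minimal-norm solution $\bfdb=\bigl((\bfW^T\bfV)^{-1}\bfW^T\bigr)^{\dagger}\bfdq\in\mathcal{W}$ of the same equation $(\bfW^T\bfV)^{-1}\bfW^T\bfdb=\bfdq$; both satisfy $\|\bfdb\|_2\leq\|\bfV\|_2\|\bfdq\|_2$ and yield identical final estimates.
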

\begin{proof}
First, recall that $\mathbf{q}=\mathbf{U}^T \mathbf{b}$,
 where $\mathbf{U}^T = (\mathbf{W}^T\mathbf{V})^{-1}\mathbf{W}^T$.
Since $\mathbf{U}^T$ has full row--rank, we can determine
$\bfdb$ such that $\bfdq=\mathbf{U}^T
\bfdb$. 
(The unique $\bfdb$ of minimal
Euclidean norm is
$\bfdb=(\mathbf{U}^T)^\dagger\mathbf{\bfdq}\in\mathcal{W}$.)
Using (\ref{eq:A^bullet}) and $\mathbf{A}_r=\mathbf{U}^T
\mathbf{A}\mathbf{V}$ we can write then
\begin{equation}
\mathbf{U}^T \mathbf{A}\mathbf{V} + \mathbf{U}^T
\bfdb\bfe^T = \bfSigma_r - \mathbf{U}^T
(\mathbf{b}-\bfdb)\bfe^T,
\end{equation}
where $\|\bfdb\|_2\leq \|\mathbf{U}^\dagger\|_2\|\delta
\mathbf{q}\|_2\leq \kappa_2(\mathbf{U})  \|\mathbf{b}\|_2 \bfvareps$.
Since we can express $\mathbf{e}$ as $\mathbf{e}=\mathbf{V}^T
\mathbf{f}$ with smallest possible
$\mathbf{f}=(\mathbf{V}^T)^\dagger\mathbf{e}\in\mathcal{V}$, we
obtain 
\begin{equation}
\mathbf{A}_r^\bullet = \mathbf{U}^T ( \mathbf{A} +
\bfdb\mathbf{f}^T) \mathbf{V}
 = \bfSigma_r - \mathbf{U}^T
(\mathbf{b}-\bfdb)\bfe^T.
\end{equation}
Set $\bfdA=\bfdb\mathbf{f}^T$ and note that
$\|\bfdA\|_2=\|\bfdb\|_2 \|\mathbf{f}\|_2$.
From Proposition \ref{PROP:A_r^bullet}, under the mild assumption
that $\bfvareps^\bullet < 1/2$, we conclude that
$$
\|\bfdq\|_2 \leq
\frac{2\bfvareps^\bullet}{\sqrt{r}(1-2\bfvareps^\bullet)}\|\mathbf{A}_r\|_2,\;\;
\mbox{and thus}\;\; \|\bfdb\|_2 \leq \frac{2
\|\mathbf{U}^\dagger\|_2
\bfvareps^\bullet}{\sqrt{r}(1-2\bfvareps^\bullet)}\|\mathbf{A}_r\|_2.
$$
Since $\|\mathbf{U}^\dagger\|_2 \leq \|\mathbf{V}\|_2$ and
$\|\mathbf{f}\|_2 \leq \sqrt{r}\|\mathbf{V}^\dagger\|_2$, we have
$$
\|\delta \mathbf{A}\|_2 \leq
\kappa_2(\mathbf{V})\frac{2\bfvareps^\bullet}{1-2\bfvareps^\bullet}\|\mathbf{A}_r\|_2,
\;\;\mbox{where}\;\;\|\mathbf{A}_r\| \leq
\frac{\kappa_2(\mathbf{V})}{\cos\angle(\mathcal{V,W})}\|\mathbf{A}\|_2.
$$
Further, it holds that
$$
\mathbf{V}\bfSigma_r - (\mathbf{A}+\bfdelta \mathbf{b}\mathbf{f}^T)
\mathbf{V} =\mathbf{V}\bfSigma_r - \mathbf{A} \mathbf{V} - \bfdelta
\mathbf{b}\mathbf{f}^T \mathbf{V} = \mathbf{b}\bfe^T -
\bfdb\bfe^T
=(\mathbf{b}-\bfdb)\bfe^T,
$$
and this implicitly enforces the interpolation conditions. \hfill$\Box$
\end{proof}

\begin{remark}
To claim Hermite interpolation, the only freedom left is to change
$\mathbf{c}$ into $\mathbf{c}+\bfdelta\mathbf{c}$ to guarantee that
$(\sigma_i\bfI - (\mathbf{A}^T +
\mathbf{f}\bfdb^T))^{-1}(\mathbf{c}+\bfdelta\mathbf{c})\in\mathcal{W}$
for $i=1,\ldots, r$. In other words, with some 
$r\times r$ matrix $\bfOmega$, we should have
$$
\mathbf{W}\bfOmega \bfSigma - (\mathbf{A}^T +
\mathbf{f}\bfdb^T)\mathbf{W}\bfOmega =
(\mathbf{c}+\bfdelta\mathbf{c})\bfe^T.
$$
If $\bfOmega$ commutes with $\bfSigma$, then $\bfdelta\mathbf{c}\bfe^T = \mathbf{c}\bfe^T (\bfOmega - \bfI) - \mathbf{f}\bfdb^T\mathbf{W}\bfOmega$. We can take $\bfOmega = \bfI$ and instead of the equality (which is not possible to to obtain), we can choose $\bfdelta \mathbf{c} = - (1/r)\bff\bfdb^T\mathbf{W}\bfe$,  which is the least squares approximation.
Even though this least-squares construction might provide a near-Hermite interpolation, a more elaborate construction is needed to obtain  exact Hermite interpolation for a backward  perturbed system. The framework that Beattie \emph{et al.} \cite{beattie2012inexact} provided for Hermite interpolation of a backward perturbed system in the special case of inexact solves might prove helpful in this direction.
\end{remark}

Our analysis in Section \ref{SS=Limitations-fin-prec}, specifically Theorem \ref{mumutildethm}, illustrated that the condition number of the Cauchy matrix $\bfC$ plays a crucial role in the perturbation analysis. And the numerical examples showed that despite  Cauchy matrices are known to be extremely ill-conditioned, the \textsf{IRKA} iterations drastically reduced these conditions numbers as the shifts converge to the optimal ones, i.e., as \textsf{IRKA} converges. Our analysis in this section now reveals another important quantity measure: $\frac{\kappa_2(\mathbf{V})}{\cos\angle(\mathcal{V,W})}$. Next, we will repeat the same numerical examples of Section \ref{SS=Limitations-fin-prec} and inspect how $\kappa_2(\bfV)$  and
$\frac{\kappa_2(\mathbf{V})}{\cos\angle(\mathcal{V,W})}$
vary during \textsf{IRKA}.

\begin{example}  \label{example:condV}
We use the same models and experiments from Example \ref{example:condC}. During the reduction of the CD player model to $r=2$, $r=16$, and $r=26$ via \textsf{IRKA}, we record the evolution of $\kappa_2(\bfV)$  and
$\frac{\kappa_2(\mathbf{V})}{\cos\angle(\mathcal{V,W})}$. The results depicted in Figure \ref{fig:cdcondV} show a similar story: Both quantities are drastically reduced during the iteration and thus leading to significantly smaller backward  errors $\| \bfdq\|$ and $\| \bfdelta\bfA\|$ in Theorem \ref{TM:dA_for_A^bullet}.

\begin{figure}[hh]
\begin{center}
\includegraphics[scale=0.38]{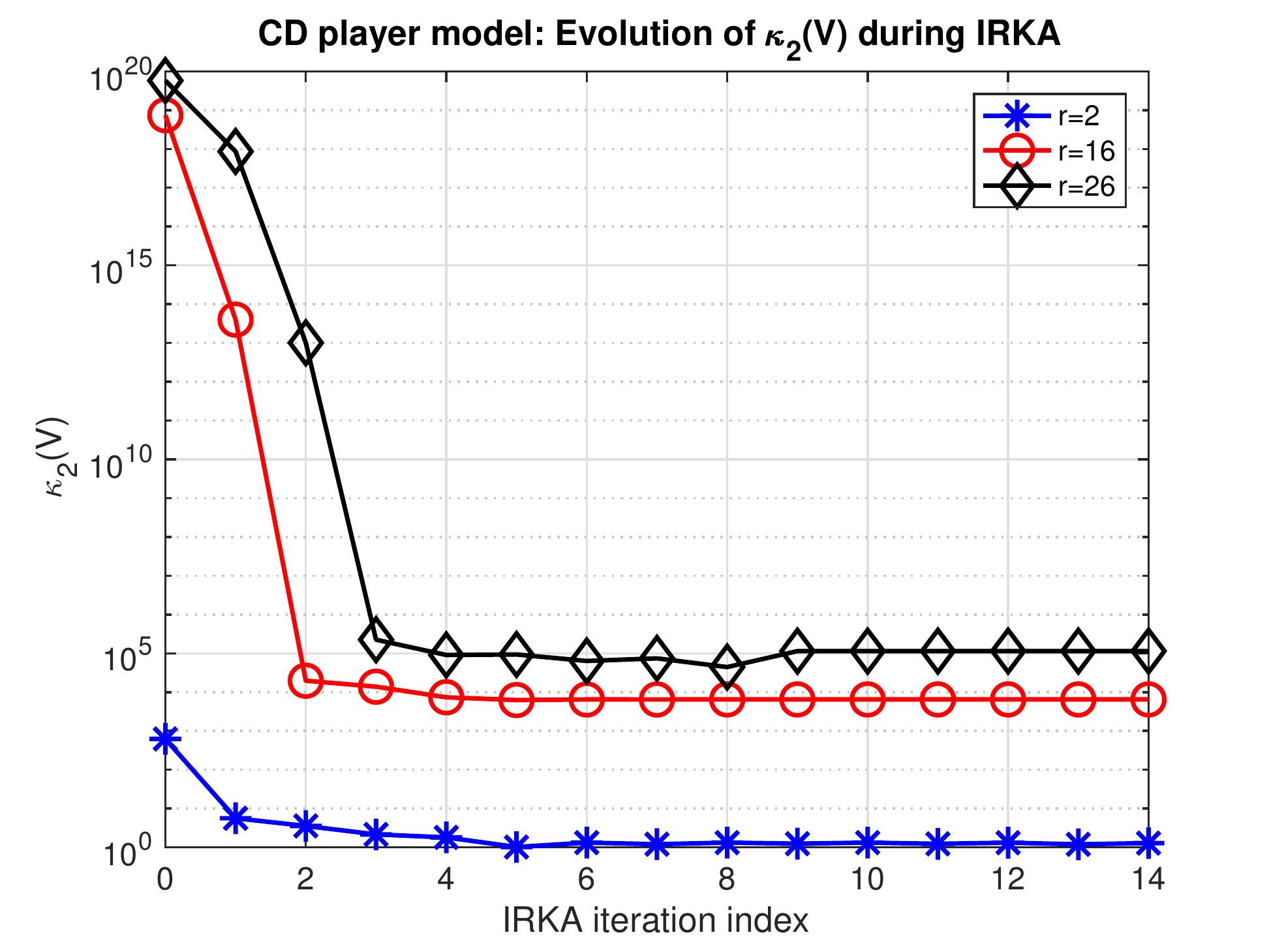}
\includegraphics[scale=0.38]{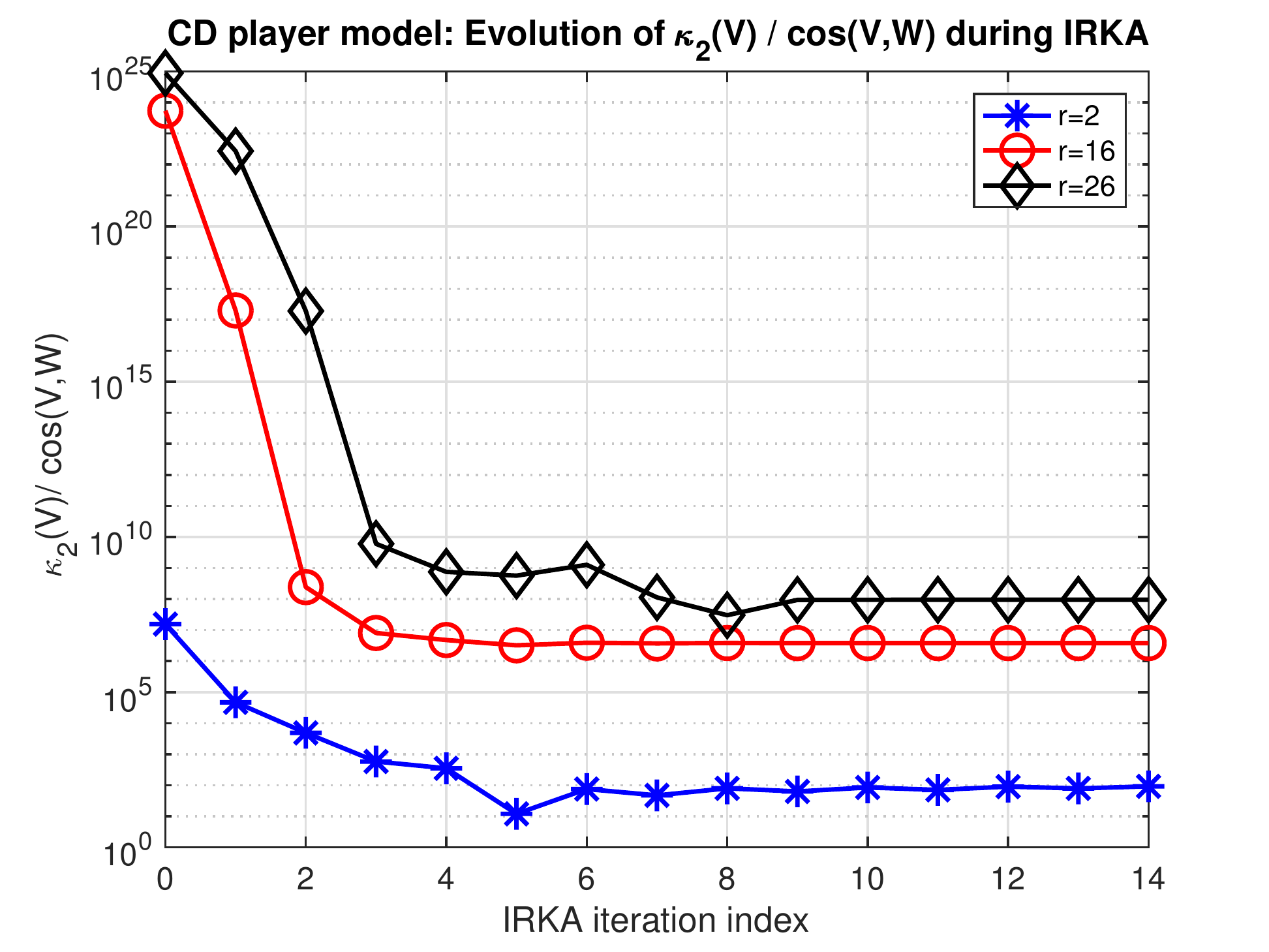}
\end{center}
\caption{$\kappa_2(\bfV)$ and $\kappa_2(\bfV)/\cos(\mathcal{V},\mathcal{W})$ during \textsf{IRKA} for the CD Player example}
\label{fig:cdcondV}
\end{figure}

We repeat the same experiments for the ISS 1R model and the results are shown in Figure \ref{fig:isscondV}. The conclusion is the same: $\kappa_2(\bfV)$  and
$\frac{\kappa_2(\mathbf{V})}{\cos\angle(\mathcal{V,W})}$ are reduced ten orders of magnitudes during \textsf{IRKA}.
\begin{figure}[hh]
\begin{center}
\includegraphics[scale=0.38]{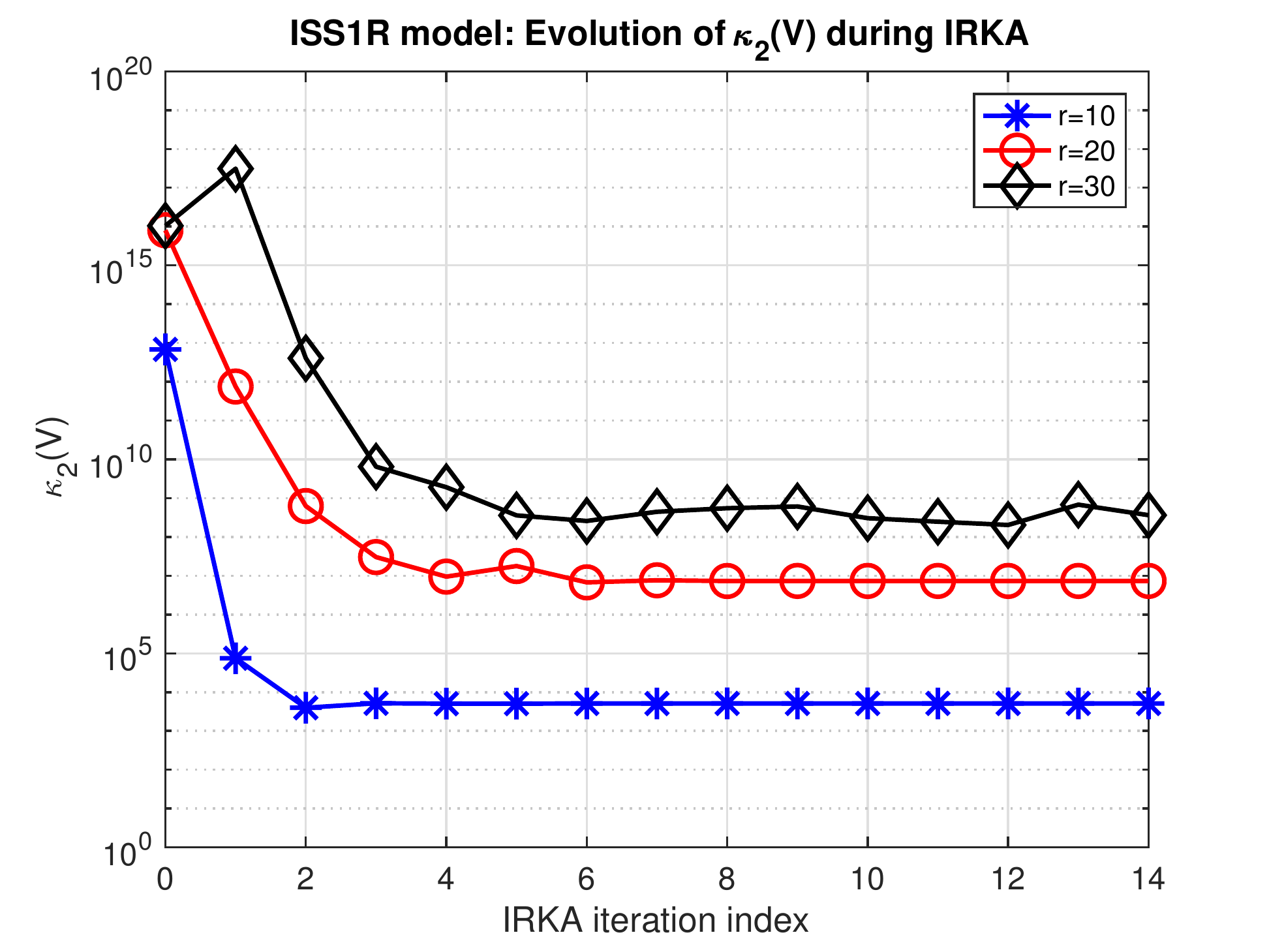}
\includegraphics[scale=0.38]{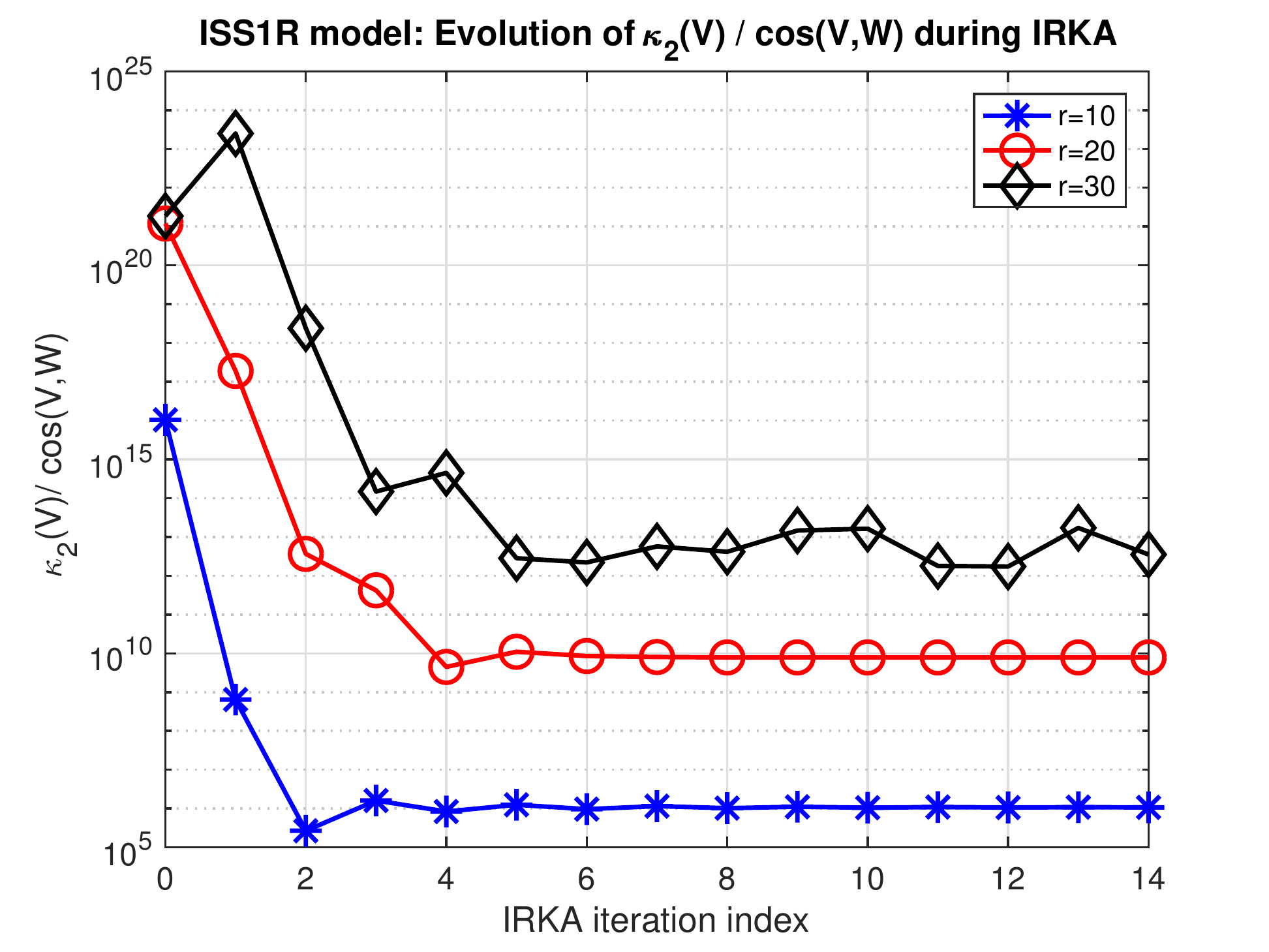}
\end{center}
\caption{$\kappa_2(\bfV)$ and $\kappa_2(\bfV)/\cos(\mathcal{V},\mathcal{W})$ during \textsf{IRKA} for ISS1R Example}
\label{fig:isscondV}
\end{figure}
\end{example}

\section{Conclusions}

By employing primitive rational Krylov bases, we have provided here an analysis for the structure of reduced order quantities appearing in \textsf{IRKA} that reveals a deep connection to the classic pole-placement problem.   We exploited this connection to motivate algorithmic modifications to \textsf{IRKA} and developed a complementary backward stability analysis. 
Several numerical examples demonstrate  \textsf{IRKA}’s remarkable tendency to realign shifts (interpolation points) in a way that drastically reduces the condition numbers of the quantities involved, thus minimizing perturbative effects and accounting in some measure for  \textsf{IRKA}’s observed robustness. 
\bibliographystyle{spmpsci}
\bibliography{ARK_bib}

\begin{thebibliography}{10}
\providecommand{\url}[1]{{#1}}
\providecommand{\urlprefix}{URL }
\expandafter\ifx\csname urlstyle\endcsname\relax
  \providecommand{\doi}[1]{DOI~\discretionary{}{}{}#1}\else
  \providecommand{\doi}{DOI~\discretionary{}{}{}\begingroup
  \urlstyle{rm}\Url}\fi

\bibitem{ahuja2015recycling}
Ahuja, K., Benner, P., de~Sturler, E., Feng, L.: Recycling bicgstab with an
  application to parametric model order reduction.
\newblock SIAM Journal on Scientific Computing \textbf{37}(5), S429--S446
  (2015)

\bibitem{ahuja2012recycling}
Ahuja, K., de~Sturler, E., Gugercin, S., Chang, E.: Recycling {BiCG} with an
  application to model reduction.
\newblock SIAM Journal on Scientific Computing \textbf{34}(4), A1925--A1949
  (2012)

\bibitem{AntBG20}
Antoulas, A., , Beattie, C., Gugercin, S.: Interpolatory Methods for Model
  Reduction.
\newblock Society for Industrial and Applied Mathematics, to appear (2020)

\bibitem{antoulas2005approximation}
Antoulas, A.: {Approximation of Large-Scale Dynamical Systems (Advances in
  Design and Control)}.
\newblock Society for Industrial and Applied Mathematics, Philadelphia, PA, USA
  (2005)

\bibitem{abg10}
Antoulas, A.C., Beattie, C.A., Gugercin, S.: Interpolatory model reduction of
  large-scale dynamical systems.
\newblock In: J.~Mohammadpour, K.~Grigoriadis (eds.) Efficient Modeling and
  Control of Large-Scale Systems, pp. 2--58. Springer-Verlag (2010)

\bibitem{antoulas2001asurvey}
Antoulas, A.C., Sorensen, D.C., Gugercin, S.: A survey of model reduction
  methods for large-scale systems.
\newblock Contemporary Mathematics \textbf{280}, 193--219 (2001)

\bibitem{beattie2007kbm}
Beattie, C., Gugercin, S.: {Krylov}-based minimization for optimal
  {$\mathcal{H}_2$} model reduction.
\newblock In: Proceedings of 46th IEEE Conference on Decision and Control, pp.
  4385--4390 (2007)

\bibitem{beattie2009trm}
Beattie, C., Gugercin, S.: A trust region method for optimal {$\mathcal{H}_2$}
  model reduction.
\newblock In: Proceedings of 48th IEEE Conference on Decision and Control, pp.
  5370--5375 (2009)

\bibitem{beattie2012realization}
Beattie, C., Gugercin, S.: Realization-independent
  $\mathcal{H}_2$-approximation.
\newblock In: Proceedings of 51st IEEE Conference on Decision and Control, pp.
  4953 -- 4958 (2012)

\bibitem{beattie2012inexact}
Beattie, C., Gugercin, S., Wyatt, S.: Inexact solves in interpolatory model
  reduction.
\newblock Linear Algebra and its Applications \textbf{436}(8), 2916--2943
  (2012)

\bibitem{breiten12}
Benner, P., Breiten, T.: Interpolation-based $\mathcal{H}_2$-model reduction of
  bilinear control systems.
\newblock SIAM J. Matrix Anal. Appl. \textbf{33}, 859--885 (2012)

\bibitem{benner2018h2}
Benner, P., Goyal, P., Gugercin, S.: $\mathcal{H}_2$-quasi-optimal model order
  reduction for quadratic-bilinear control systems.
\newblock SIAM Journal on Matrix Analysis and Applications \textbf{39}(2),
  983--1032 (2018)

\bibitem{benner_sylvester}
Benner, P., K\"{o}hler, M., Saak, J.: Sparse-dense {S}ylvester equations in
  $\mathcal{H}_2$-model order reduction.
\newblock Tech. Rep. MPIMD/11-11, Max Planck Institute Magdeburg Preprints
  (2011)

\bibitem{MORBook}
Benner, P., Ohlberger, M., Cohen, A., Willcox, K.: Model Reduction and
  Approximation.
\newblock Society for Industrial and Applied Mathematics, Philadelphia, PA
  (2017).
\newblock \doi{10.1137/1.9781611974829}.
\newblock
  \urlprefix\url{https://epubs.siam.org/doi/abs/10.1137/1.9781611974829}

\bibitem{breiten2013near}
Breiten, T., Beattie, C., Gugercin, S.: Near-optimal frequency-weighted
  interpolatory model reduction.
\newblock Systems \& Control Letters \textbf{78}, 8--18 (2015)

\bibitem{bunse-gerstner2009hom}
Bunse-Gerstner, A., Kubalinska, D., Vossen, G., Wilczek, D.:
  {$\mathcal{H}_2$-optimal model reduction for large scale discrete dynamical
  MIMO systems}.
\newblock Journal of Computational and Applied Mathematics  (2009).
\newblock Doi:10.1016/j.cam.2008.12.029

\bibitem{NICONET}
Chahlaoui, Y., Dooren, P.V.: A collection of benchmark examples for model
  reduction of linear time invariant dynamical systems.
\newblock Tech. rep., SLICOT Working Note 2002-2 (2002)

\bibitem{ChaV2005}
Chahlaoui, Y., Van~Dooren, P.: Benchmark examples for model reduction of linear
  time-invariant dynamical systems.
\newblock In: P.~Benner, D.~Sorensen, V.~Mehrmann (eds.) Dimension Reduction of
  Large-Scale Systems, pp. 379--392. Springer Berlin Heidelberg, Berlin,
  Heidelberg (2005)

\bibitem{DraSB92}
Draijer, W., Steinbuch, M., Bosgra, O.: Adaptive control of the radial servo
  system of a compact disc player.
\newblock Automatica \textbf{28}(3), 455--462 (1992).
\newblock \doi{10.1016/0005-1098(92)90171-B}

\bibitem{DGB-VF-Q-1}
Drma\v{c}, Z., Gugercin, S., Beattie, C.: Quadrature-based vector fitting for
  discretized $\mathcal{H}_2$ approximation.
\newblock SIAM Journal on Scientific Computing \textbf{37}(2), A625--A652
  (2015).
\newblock \doi{10.1137/140961511}.
\newblock \urlprefix\url{https://doi.org/10.1137/140961511}

\bibitem{DGB-VF-Q-2}
Drma\v{c}, Z., Gugercin, S., Beattie, C.: Vector fitting for matrix-valued
  rational approximation.
\newblock SIAM Journal on Scientific Computing \textbf{37}(5), A2346--A2379
  (2015).
\newblock \doi{10.1137/15M1010774}.
\newblock \urlprefix\url{https://doi.org/10.1137/15M1010774}

\bibitem{eisenstat1998tap}
Eisenstat, S., Ipsen, I.: {Three Absolute Perturbation Bounds for Matrix
  Eigenvalues Imply Relative Bounds}.
\newblock SIAM J. Matrix Anal. Appl. \textbf{20}, 149 (1998)

\bibitem{elsner1995svd}
Elsner, L., Friedland, S.: {Singular Values Doubly Stochastic Matrices and
  Applications}.
\newblock Linear Algebra and its Applications \textbf{220}(1), 161--170 (1995)

\bibitem{Ferrante_Krajewski...:1999:Convergent_alg_L2}
Ferrante, A., Krajewski, W., Lepschy, A., Viaro, U.: Convergent algorithm for
  {$L_2$} model reduction.
\newblock Automatica \textbf{35}, 75--79 (1999)

\bibitem{FLAGG2012688}
Flagg, G., C., B., Gugercin, S.: Convergence of the iterative rational krylov
  algorithm.
\newblock Systems \& Control Letters \textbf{61}(6), 688 -- 691 (2012).
\newblock \doi{https://doi.org/10.1016/j.sysconle.2012.03.005}.
\newblock
  \urlprefix\url{http://www.sciencedirect.com/science/article/pii/S0167691112000576}

\bibitem{flagg15}
Flagg, G., Gugercin, S.: Multipoint {V}olterra series interpolation and
  $\mathcal{H}_2$ optimal model reduction of bilinear systems.
\newblock SIAM Journal on Matrix Analysis and Applications \textbf{36},
  549--579 (2015)

\bibitem{goyal2019time}
Goyal, P., Redmann, M.: Time-limited h2-optimal model order reduction.
\newblock Applied Mathematics and Computation \textbf{355}, 184--197 (2019)

\bibitem{gugercin2008isk}
Gugercin, S.: An iterative {SVD-Krylov} based method for model reduction of
  large-scale dynamical systems.
\newblock Linear Algebra and Its Applications \textbf{428}(8-9), 1964--1986
  (2008)

\bibitem{Gugercin_Antoulas_Beattie:2008:IRKA}
Gugercin, S., Antoulas, A.C., Beattie, C.: $\mathcal{H}_2$ model reduction for
  large-scale linear dynamical systems.
\newblock SIAM J. Matrix Anal. Appl. \textbf{30}, 609--638 (2008).
\newblock \doi{10.1137/060666123}.
\newblock \urlprefix\url{http://portal.acm.org/citation.cfm?id=1461865.1461876}

\bibitem{morGugAB01}
Gugercin, S., Antoulas, A.C., Bedrossian, M.: Approximation of the
  international space station 1r and 12a flex models.
\newblock In: Proceedings of the IEEE Conference on Decision and Control, pp.
  1515--1516 (2001).
\newblock \doi{10.1109/CDC.2001.981109}

\bibitem{he:ppp}
He, C., Laub, A., Mehrmann, V.: {Placing plenty of poles is pretty
  preposterous}.
\newblock Tech. rep., Preprint SPC 95-17, Forschergruppe Scientific Parallel
  Computing, Fakultät f\"{u}r Mathematik, TU Chemnitz-Zwickau
  http://www.tu-chemnitz.de/sfb393/Files/AFS/spc95\_17.ps.Z (1995)

\bibitem{HokMag2018}
Hokanson, J., Magruder, C.: $\mathcal{H}_2$-optimal model reduction using
  projected nonlinear least squares (1811.11962) (2018).
\newblock \urlprefix\url{https://arxiv.org/abs/1811.11962}

\bibitem{hyland1985theoptimal}
Hyland, D., Bernstein, D.: The optimal projection equations for model reduction
  and the relationships among the methods of {Wilson}, {Skelton}, and {Moore}.
\newblock IEEE Transactions on Automatic Control, \textbf{30}(12), 1201--1211
  (1985)

\bibitem{krajewski1995program}
Krajewski, W., Lepschy, A., Redivo-Zaglia, M., Viaro, U.: A program for solving
  the $\mathcal{L}_2$ reduced-order model problem with fixed denominator
  degree.
\newblock Numerical Algorithms \textbf{9}(2), 355--377 (1995)

\bibitem{Krajewski_Viaro:2009:Iterative_Interpolation__alg_L2}
Krajewski, W., Viaro, U.: Iterative--interpolation algorithms for {$L_2$} model
  reduction.
\newblock Control and Cybernetics \textbf{38}(2), 543--554 (2009)

\bibitem{mehrmann1996app}
Mehrmann, V., Xu, H.: {An analysis of the pole placement problem. I. the
  single-input case}.
\newblock Electronic Transactions on Numerical Analysis \textbf{4}, 89--105
  (1996)

\bibitem{mehrmann1998cps}
Mehrmann, V., Xu, H.: {Choosing poles so that the single-input pole placement
  problem is well conditioned}.
\newblock SIAM J. Matrix Anal. Appl. \textbf{19}(3), 664--681 (1998)

\bibitem{meieriii1967approximation}
Meier~III, L., Luenberger, D.: Approximation of linear constant systems.
\newblock Automatic Control, IEEE Transactions on \textbf{12}(5), 585--588
  (1967)

\bibitem{panzer2013}
Panzer, H., Jaensch, S., Wolf, T., Lohmann, B.: A greedy rational {K}rylov
  method for $\mathcal{H}_2$-pseudooptimal model order reduction with
  preservation of stability.
\newblock In: American Control Conference, 2013, pp. 5512--5517 (2013)

\bibitem{poussot2011iterative}
Poussot-Vassal, C.: An iterative svd-tangential interpolation method for
  medium-scale mimo systems approximation with application on flexible
  aircraft.
\newblock In: 2011 50th IEEE Conference on Decision and Control and European
  Control Conference, pp. 7117--7122. IEEE (2011)

\bibitem{spanos1992anewalgorithm}
Spanos, J., Milman, M., Mingori, D.: A new algorithm for {$L^2$} optimal model
  reduction.
\newblock Automatica \textbf{28}(5), 897--909 (1992)

\bibitem{vandooren2008hom}
Van~Dooren, P., Gallivan, K., Absil, P.: {$\mathcal{H}_2$-optimal model
  reduction of MIMO systems}.
\newblock Applied Mathematics Letters \textbf{21}(12), 1267--1273 (2008)

\bibitem{vuillemin2013h2}
Vuillemin, P., Poussot-Vassal, C., Alazard, D.: $\mathcal{H}_2$ optimal and
  frequency limited approximation methods for large-scale {LTI} dynamical
  systems.
\newblock IFAC Proceedings Volumes \textbf{46}(2), 719--724 (2013)

\bibitem{zigic1993contragredient}
\v{Z}igi\'{c}, D., Watson, L., Beattie, C.: Contragredient transformations
  applied to the optimal projection equations.
\newblock Linear Algebra and Its Applications \textbf{188}, 665--676 (1993)

\bibitem{wilson1970optimum}
Wilson, D.: Optimum solution of model-reduction problem.
\newblock Proc. IEE \textbf{117}(6), 1161--1165 (1970)

\bibitem{xu2010optimal}
Xu, Y., Zeng, T.: Optimal $\mathcal{H}_2$ model reduction for large scale
  {MIMO} systems via tangential interpolation.
\newblock International Journal of Numerical Analysis and Modeling
  \textbf{8}(1), 174--188 (2011)

\end{thebibliography}
 
\end{document}